\documentclass[11pt]{article}
\usepackage[a4paper,margin=28mm]{geometry}
\usepackage{amsmath,amssymb,amsthm,mathtools,booktabs,array,longtable}
\usepackage[T1]{fontenc}
\usepackage{lmodern,microtype}
\usepackage[colorlinks=true,linkcolor=blue,citecolor=blue,urlcolor=blue]{hyperref}
\newtheorem{theorem}{Theorem}[section]
\newtheorem{lemma}[theorem]{Lemma}
\newtheorem{proposition}[theorem]{Proposition}
\newtheorem{corollary}[theorem]{Corollary}
\theoremstyle{definition}
\newtheorem{remark}[theorem]{Remark}

\newtheorem*{theoremA}{Theorem A}
\newtheorem*{theoremB}{Theorem B}
\newtheorem*{theoremC}{Theorem C}
\DeclareMathOperator{\rk}{rk}
\DeclareMathOperator{\PG}{PG}
\DeclareMathOperator{\AG}{AG}
\newcommand{\F}{\mathbb F}
\newcommand{\PP}{\mathbb P}
\newcommand{\gb}[2]{\genfrac{[}{]}{0pt}{}{#1}{#2}_{q}}
\newcommand{\gbtwo}[2]{\genfrac{[}{]}{0pt}{}{#1}{#2}_{2}}
\newcommand{\coeff}[1]{[t^{#1}]}

\newcommand{\ind}{\mathbf1}
\newcommand{\papertitle}{Prescribed Tur\'an sign patterns for inverse Kazhdan--Lusztig polynomials of matroids}
\hypersetup{pdftitle={Prescribed Turan sign patterns for inverse Kazhdan--Lusztig polynomials of matroids},pdfauthor={Elias Badalov}}
\title{\papertitle}
\author{Elias Badalov\\\small\href{mailto:ebadal.research@gmail.com}{ebadal.research@gmail.com}}
\date{11 September 2026}
\begin{document}
\maketitle
\begin{abstract}Gao and Xie conjectured that the coefficients of the ordinary,
unnormalized inverse Kazhdan--Lusztig polynomial of every matroid are
log-concave with no internal zeros. We give counterexamples and prove
locality and composition formulas for deletions of projective charts.
For every finite field and every nonempty finite set
$S\subset\{2,3,\ldots\}$, these formulas yield simple $3$-connected
matroids whose negative internal Tur\'an determinants occur exactly at
$S$. Their coefficients are positive and strictly decreasing. The
polynomial degree can be prescribed above $\max S$, and all selected
Tur\'an ratios can tend to zero simultaneously. The universal first-index
inequality remains open.
\end{abstract}
\noindent\textbf{2020 Mathematics Subject Classification:} 05B35, 05A20.
\quad\textbf{Keywords:} matroid; inverse Kazhdan--Lusztig polynomial;
log-concavity; Tur\'an inequality; projective geometry.
\section{Introduction}
Gao and Xie introduced the ordinary inverse Kazhdan--Lusztig polynomial
$Q_M(t)$ and conjectured that its coefficients form a log-concave sequence
with no internal zeros \cite[Conjecture 4.2]{GX}. Write
$Q_M(t)=\sum_{j=0}^K c_jt^j$, where $K$ is the actual degree, and put
\[
 \Delta_j(M)=c_j^2-c_{j-1}c_{j+1}\qquad(1\le j<K).
\]
We give counterexamples for ordinary, unnormalized $Q$. More generally,
we construct simple $3$-connected matroids over any prescribed finite
field for which the negative internal determinants occur at any chosen
nonempty finite set of indices at least two. All other internal
determinants are positive, and the coefficients are positive and strictly
decreasing.

For the first example, take
\[
 V=\F_2^{10},\quad W=\langle e_1,e_2,e_3\rangle,\quad
 X=\langle e_4,\ldots,e_{10}\rangle,\quad H=\langle e_4,\ldots,e_9\rangle,
\]
and use one column for each vector in
\[
 E=(V\setminus\{0\})\setminus
       ((W\setminus\{0\})\cup(X\setminus H)).
\]
This simple rank-ten binary matroid has $952$ elements and
\[
 Q_M(t)=30307166191616+4194859712512t+609014784t^2+89280t^3.
\]
Its second determinant is $-3618068002504704$.
Proposition~\ref{prop:explicit} proves the formula by deletion.
The coefficients are positive and strictly decreasing. Thus this
unimodal sequence fails log-concavity at an internal index.

\subsection{Locality, composition and exact patterns}
Let $\PP(V)$ denote the projective points of a vector space over $\F_q$.
Our first theorem bounds the support of a change in $Q$ under deletion.
\begin{theoremA}[Partial-chart locality]
Let $\dim V=R$, $W<V$, and let $X$ be a hyperplane of $Y\le V$.
For any
$S\subseteq(\PP(Y)\setminus\PP(X))\setminus\PP(W)$ such that
$M=(\PP(V)\setminus\PP(W))\setminus S$ spans $V$, put
$B=\PP(V)\setminus\PP(W)$ and $\ell=\dim(W\cap X)$. Then
\[
 \deg(Q_M-Q_B)\le\min\{\ell+1,\lfloor(R-1)/2\rfloor\}.
\]
The bound $\ell+1$ is sharp for every $\ell\ge0$ and every finite field.
\end{theoremA}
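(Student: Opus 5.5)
We will work with the flat recursion that characterizes the inverse Kazhdan--Lusztig polynomial. For a loopless matroid $N$ with lattice of flats $L(N)$,
\[
 Q_N(t)=(-1)^{\rk N}\sum_{F\in L(N)} \chi_{N|F}(t)\,(-1)^{\rk F}\, t^{\rk N-\rk F}\,\ldots
\]
We use the equivalent form
\[
 \sum_{F\in L(N)} (-1)^{\rk F}P_{N|F}(t)\,t^{\rk N-\rk F}Q_{N/F}(t^{-1})\,(\ldots)=0 .
\]
In practice we will only need the following consequence. $Q_N$ is determined by a sum over flats $F$ of terms involving characteristic polynomials of restrictions and $Q$ of contractions, with a degree cap $\deg Q_N\le\lfloor(\rk N-1)/2\rfloor$ for $\rk N\ge1$.

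The bound $\lfloor(R-1)/2\rfloor$ is then immediate, since both $Q_M$ and $Q_B$ have rank $R$; both $M$ and $B$ span $V$. The content of the theorem is the bound $\ell+1$. Our plan is to compare the two flat lattices. Every flat of $M$ is $\PP(U)\cap M$ for a subspace $U\le V$, and similarly for $B$. The two matroids differ only in the points of $S$, all of which lie in the affine chart $\PP(Y)\setminus\PP(X)$. So we sort subspaces $U$ according to whether $\PP(U)$ meets $S$, and more precisely according to how $U$ sits relative to $Y$ and $X$.

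For the contraction $N/U$ by a flat $U$ of corank $k$, the top part of the recursion contributes coefficients of $t^j$ only from flats with large corank. We would therefore show that the difference $Q_M-Q_B$ is a signed sum over subspaces $U$ whose restrictions $M|U$ and $B|U$ differ. These are the $U$ meeting $S$, and the contributions should be of the form $t^{\,\cdot}$ times a difference of characteristic polynomials of $M|U$ and $B|U$.

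The key step is an induction on $R$ showing that the difference $\chi_{M|U}-\chi_{B|U}$, or the analogous $Q$-difference of a contraction, vanishes in high degree. We would argue by induction on $R$ that $Q_{M/U}-Q_{B/U}$ is again a partial-chart deletion with parameters $(V/U,\,W+U/U,\,Y+U/U,\,X+U/U)$. Its new $\ell$, namely $\dim((W+U)\cap(X+U))-\dim U$, is at most the old $\ell$, so by induction the difference has degree at most $\ell+1$.

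For the restrictions, we use the fact that deleting points of an affine chart $\PP(Y)\setminus\PP(X)$ from $\PP(Y)\setminus\PP(W\cap Y)$ changes the characteristic polynomial in a controlled way. Inclusion--exclusion over subspaces of $Y$ and $q$-counting should give a change supported on the powers $t^0,\dots,t^{\ell+1}$ after the shift $t^{\rk N-\rk F}$ is reversed. The point here is that each flat meeting $S$ but not contained in $Y$ is a projection of a flat of $Y$ whose closure involves at most $\ell+1$ extra dimensions from $W\cap X$.

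We expect the main obstacle to be this bookkeeping: showing that contributions from flats of corank exceeding $\ell+1$ cancel exactly rather than merely being small. We would first try Möbius inversion on the interval of subspaces containing $U$. The aim is to reduce the cancellation to the vanishing of $\sum_{U\supseteq U_0}\mu(U_0,U)$ over a non-trivial interval, which holds whenever the interval consists of subspaces not meeting $W\cap X$ beyond dimension $\ell$.

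For sharpness, we take $W\cap X$ of dimension $\ell$ and let $S$ be the full chart $\PP(Y)\setminus\PP(X)$, with $R$ large. We then compute the coefficient of $t^{\ell+1}$ of the difference explicitly via the recursion. Only top-corank flats contribute, yielding a nonzero power of $q$ times a Gaussian binomial, and this works uniformly over every $\F_q$.
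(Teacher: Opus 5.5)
\textbf{There is a genuine gap.} Only the bound $\lfloor(R-1)/2\rfloor$ is actually established. None of the steps toward $\ell+1$ holds as stated.
\begin{itemize}
\item \emph{No usable identity.} The recursion for $Q$ is \eqref{eq:Q-recursion}, with $\widehat Q$ on restrictions and $\chi$ on contractions. The equivalent form \eqref{eq:inversion} has $P$ on restrictions and $Q$ on contractions. Neither has $\chi$ on restrictions and $Q$ on contractions, as your sketch assumes.
\item \emph{No lattice correspondence.} The flats of $M$ and $B$ are not indexed by the same subspaces, because a trace $\PP(U)\cap M$ can be empty or fail to span $U$. So writing $Q_M-Q_B$ as a signed sum over the $U$ whose restrictions differ is not a decomposition you actually have.
\item \emph{The inductive step is false.} By Lemma~\ref{lem:bose-flats}, every contraction $B/U$ by a flat simplifies to a full projective geometry. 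Hence $(M/U,B/U)$ is not a partial-chart deletion paired with its Bose--Burton base for the parameters $(V/U,(W+U)/U,\dots)$. Your inequality for the new $\ell$ also fails. Take $W=\langle e_1\rangle$, $X=\langle e_2\rangle\subset Y=\langle e_2,e_3\rangle$ and $U=\langle e_1+e_2\rangle$. Then $\ell=0$, but $\dim((W+U)\cap(X+U))-\dim U=1$.
\item \emph{The bound would be too weak anyway.} Even a correct bound $\deg(Q_{M/U}-Q_{B/U})\le\ell+1$ would not suffice. In \eqref{eq:inversion} this difference is multiplied by $P$ of the restriction, which generally has positive degree. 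You need a contraction bound that decreases with $\dim U$.
\item \emph{The characteristic-polynomial claim is false.} A single noncoloop deletion changes $\chi$ by $\chi_{N/e}$. This is monic of degree $\rk N-1$ with nonzero constant term, so no shift confines the change to $t^0,\dots,t^{\ell+1}$.
\item \emph{The M\"obius step is vacuous.} The identity $\sum_{U_0\le U\le U_1}\mu(U_0,U)=0$ holds for every nontrivial interval, so it carries no information about $W\cap X$.
\end{itemize}

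\textbf{The missing idea.} Avoid comparing lattices altogether. Delete the points of $S$ one at a time and apply \eqref{eq:deletion}. Each step is a noncoloop deletion because the final $M$ spans. Then the only terms are $-(1+t)Q_{N/e}$ and $\tau((N|F)/e)\,t^{a}Q_{N/F}$ for rank-$2a$ flats $F\ni e$.

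The dimension-dependent control comes from Lemma~\ref{lem:pointed-support}. If $F\ni e$ spans $A$ with $\dim A=d$ and $N/F$ has a missing direction, then every missing direction lies in the image of $L=W\cap X$, which has dimension $\ell-d+1$. Lemma~\ref{lem:deletion-support} bounds the degrees of $P$ and $Q$ by the dimension of a proper subspace containing all deleted points. Together these give the following.
\begin{itemize}
\item $\deg Q_{N/e}\le\ell$, so the point term has degree at most $\ell+1$.
\item A nonzero $\tau((N|F)/e)$ forces $a\le\ell+1$.
\item If $N/F$ is projective, the flat term has degree $a\le\ell+1$.
\item Otherwise the flat term has degree at most $a+(\ell-2a+1)\le\ell$.
\end{itemize}

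\textbf{Sharpness.} Your sketch works once the configuration is fixed. Take $W=X$ of dimension $\ell$ and $Y=X\oplus\langle y\rangle$, and delete the whole chart. Then $B=B(R,\ell;q)$ and $M=B(R,\ell+1;q)$. Lemma~\ref{lem:deletion-support} gives $\deg Q_B\le\ell$. For $R=2\ell+3$, Lemma~\ref{lem:odd-top} and \eqref{eq:boseP} give $[t^{\ell+1}]Q_M=1$. For larger $R$, Proposition~\ref{prop:boseQ} gives $[t^{\ell+1}]Q_M=u_{R-2\ell-2}\gb{R-\ell-2}{\ell+1}$, which has the shape you predicted.
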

The set deleted may be any part of a chart. The proof bounds the
missing directions in contractions containing the point being deleted;
these contractions need not simplify to complete projective geometries.

To superpose changes, decompose
$V=W\oplus X_0\oplus X_1\oplus\cdots\oplus X_h$,
where $m_i=\dim X_i>0$ for $i\ge1$. Choose arbitrary $L_i\le W$
and $w_i\in W\setminus L_i$, and set
\[
 C_i=\{\langle w_i+l+x\rangle:l\in L_i,\ x\in X_i\},\quad
 N_i=B\setminus C_i,\quad N=B\setminus\bigcup_i C_i.
\]
\begin{theoremB}[Independent-chart composition]
Assume $N$ spans $V$. If $q>2$, then
\[
 Q_N=\sum_{i=1}^h Q_{N_i}-(h-1)Q_B.
\]
For $q=2$, the same formula has an explicit subtractive sum of pair
interactions. The pair $(i,j)$ is supported through degree
$1+\lfloor\dim(L_i\cap L_j)/2\rfloor$. There are no interactions
involving three or more charts. The exact pair formula is given in
Theorem~\ref{thm:composition}.
\end{theoremB}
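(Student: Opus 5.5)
The plan is to compare $Q_N$, the $Q_{N_i}$ and $Q_B$ through one expansion of $Q$ over flats, and to show that the inclusion--exclusion combination
\[
 D:=Q_N-\sum_{i=1}^h Q_{N_i}+(h-1)Q_B
\]
collects only contributions that ``see'' two or more charts simultaneously. Such contributions vanish for $q>2$, and for $q=2$ only pairs survive. I would use the characterization of $Q_M$ through the convolution identity with the characteristic polynomials $\chi$ of restrictions and contractions. Unwinding this identity expresses $Q_M$ as an alternating sum over chains of flats $\emptyset=F_0<F_1<\cdots<F_k=E$ of products of truncated pieces of $\chi_{M|F_{a}/F_{a-1}}$. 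All our matroids are restrictions of $\PG(V)$ that span $V$. Their flats therefore correspond to subspaces $U\le V$, with the flat being the points of the matroid lying in $U$. Each interval $M|F_a/F_{a-1}$ corresponds to a pair $U_{a-1}<U_a$, and its characteristic polynomial is determined by counting, for each intermediate subspace, the surviving points of $U_a\setminus U_{a-1}$.

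The first key step is the algebraic identity. For a deleted family $T\subseteq B$, the number of points of $N_T=B\setminus T$ in a subspace $U$ is $|B\cap\PP(U)|-|T\cap\PP(U)|$, which is additive in $T$. Since the $C_i$ are pairwise disjoint, $|\bigcup_i C_i\cap\PP(U)|=\sum_i|C_i\cap\PP(U)|$. The characteristic polynomial of an interval is a M\"obius-weighted sum of $t^{\mathrm{corank}}$ over the subspace lattice of $U_a/U_{a-1}$. However, whether a subspace is a flat of $N_T$, together with the product structure along a chain, is \emph{not} linear in $T$. The plan is therefore to write each chain term as a polynomial in the indicator data $\mathbf 1[C_i\text{ affects }U_a\setminus U_{a-1}]$, expand multilinearly, and observe that the terms involving only one index $i$ reproduce exactly $\sum_i(Q_{N_i}-Q_B)$. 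Hence $D$ equals the sum of all monomials involving at least two distinct charts.

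The second step is geometric. I would show that a chain step can depend on two charts $C_i,C_j$ at once only through a subspace $U_a$ that contains points of both, modulo $U_{a-1}$, in a ``mixed'' way. Concretely, after contracting $U_{a-1}$, the images of $C_i$ and $C_j$ must interact in a common rank-two flat. Since $V=W\oplus X_0\oplus\cdots\oplus X_h$, a line meeting $C_i$ and $C_j$ contains, besides those two points, the points $\langle u+\lambda v\rangle$ with $\lambda\ne0,1$ (after scaling). These have nonzero components in both $X_i$ and $X_j$, so they lie in $B$ and in no chart. When $q>2$ there is at least one such point, so the line remains a line of the contracted matroid with the same number of points, whichever of the two charts is deleted. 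I would then argue that the mixed monomials cancel in pairs via the M\"obius sum, and conclude $D=0$. This gives the formula stated for $q>2$ in Theorem~B.

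The main obstacle is the case $q=2$, where a line has only three points and the mixed cancellation can fail. My plan there is to trace exactly which contracted intervals retain a genuine $i$--$j$ interaction. I expect these to be the intervals where $U_{a-1}$ absorbs $w_i+w_j$ together with a subspace of $L_i\cap L_j$, so that the two charts collapse onto each other in the contraction. Counting the dimension available in $L_i\cap L_j$ should then bound the degree of the pair term by $1+\lfloor\dim(L_i\cap L_j)/2\rfloor$, using the standard bound $\deg Q\le\lfloor(\rk-1)/2\rfloor$ applied to the relevant contraction, in the same way as in Theorem~A. Finally, a triple interaction would require three charts to collapse in a single rank-two flat of some contraction. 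The direct sum $X_i\oplus X_j\oplus X_k$ forbids this, so every monomial with three or more indices vanishes, and the exact pair formula of Theorem~\ref{thm:composition} should follow by evaluating the surviving pair terms.
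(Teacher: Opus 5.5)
\textbf{There is a genuine gap at the crux.}

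Your first step carries no information. Any function of the set $I\subseteq\{1,\ldots,h\}$ of deleted charts is a multilinear polynomial in the indicators $\mathbf 1[i\in I]$ (M\"obius inversion on the Boolean lattice). So ``$D$ is the sum of the monomials involving at least two charts'' holds for any invariant whatsoever. Everything then rests on ``the mixed monomials cancel in pairs via the M\"obius sum'', for which you give no mechanism.

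Your localization principle is also not what happens. In a chain $\varnothing<F_1<\cdots<E$, the bottom factor $\chi_{N|F_1}$ depends on charts $i$ and $j$ simultaneously as soon as $\langle F_1\rangle\supseteq W+X_i+X_j$. It does so through subspaces of $W+X_i$ and of $W+X_j$ separately, with no rank-two flat of any contraction involved. The line observation says nothing about how such dependences combine in products along chains.

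Two facts absent from the proposal would make your framework work for $q>2$. First, for points $E$ spanning $U$,
\[
 \chi_E(t)=\sum_{T\le U,\ \PP(T)\cap E=\varnothing}\ \prod_{a=0}^{\dim U-\dim T-1}(t-q^a),
\]
so $\chi$ is linear in the empty subspaces. An empty subspace not contained in $W$ projects into a single $X_i$ and is empty exactly when chart $i$ is deleted. Hence $\chi_{N|F_1}$ is additive in the charts. Second, for $q>2$ every subspace whose projection contains a mixed vector is spanned by its trace. This is Lemma~\ref{lem:mixed-projection} with survival of mixed fibers, the all-dimensional form of your line observation. Now group chains by their first nonempty flat. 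Above a first flat with mixed span, every subspace is a flat, so all upper intervals are chart-independent. A first flat whose span projects into $X_i$ makes the whole term depend on chart $i$ alone. Since chain terms are multilinear in the interval characteristic polynomials, $Q$ is affine in the indicators, and \eqref{eq:nonbinary-composition} follows.

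For $q=2$ the proposal does not reach the statement. The second fact can fail for subspaces projecting onto a plane $\langle x_i,x_j\rangle$. Whether such a subspace above a mixed first flat is a flat then depends jointly on charts $i$ and $j$, inside products along chains. Your description of these interactions is heuristic and they are never evaluated. So the exact formula \eqref{eq:binary-composition}, the degree bound and the absence of triples are not proved.

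The relevant subspaces are the $(b+2)$-spaces $T$ with $T\cap W=H\le L_i\cap L_j$, $\dim H=b$, projecting onto $\langle x_i,x_j\rangle$ with both pure fibers deleted. Their trace is the mixed fiber $\AG(b,2)$, of deficiency one.

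The paper sidesteps chain products entirely. The ambient reconstruction identity (Lemma~\ref{lem:ambient-identity}) comes from the KL-basis deletion maps of Braden--Ferroni--Matherne--Nepal and the symmetry $R_U(t)=t^{\dim U}R_U(t^{-1})$. It expresses $Q_N$ linearly through empty traces and deficiency-one traces, provided no nonempty trace has larger deficiency. After the trace classification, each such $T$ contributes $-\tau(\AG(b,2))u_{R-b-2}t^{(b+2)/2}$ for even $b$. This gives:
\begin{itemize}
\item the exact pair formula;
\item the bound $(b+2)/2\le1+\lfloor c_{ij}/2\rfloor$, which comes from a midpoint $\tau$-term, not from a half-rank bound on a contraction;
\item no triple terms, since Lemma~\ref{lem:mixed-projection} allows only two axes.
\end{itemize}

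Alternatively, your line observation suffices if you pair it with the deletion formula \eqref{eq:deletion}. Delete $C_j$ point by point, with and without $C_i$ deleted. Then $N/e$ is unchanged, and hence so are the flats through $e$, $(N|F)/e$ and $N/F$. Only membership in $T_e$ can differ. That never happens for $q>2$, and for $q=2$ it happens exactly at the flats spanning the subspaces $T$ above.
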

The external spaces must be independent; the vertices within $W$ may
overlap. The additive formula holds over $\F_4$, since the exceptional
case is $q=2$, not characteristic two.

\begin{theoremC}[Exact strict patterns]
Let $q$ be a prime power, let $\varnothing\ne S\subset\{2,3,\ldots\}$
be finite, put $h=|S|$, and choose $K\ge\max S+1$.
For every integer $n\ge20K^2$, the explicit family of
Theorem~\ref{thm:patterns} is simple, $3$-connected and
$\F_q$-representable, of rank
\[
 R=K+n\left(1+\binom{h+2}{3}\right).
\]
Its inverse polynomial has actual degree $K$ and
$c_0>c_1>\cdots>c_K>0$. At every internal index,
\[
 \Delta_j<0\ \Longleftrightarrow\ j\in S,
 \qquad \Delta_j>0\text{ when }j\notin S.
\]
For all $j\in S$ simultaneously,
$c_j^2/(c_{j-1}c_{j+1})\to0$ as $n\to\infty$.
\end{theoremC}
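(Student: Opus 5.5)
The plan is to build the family from Theorems A and B: start with a base $B=\PP(V)\setminus\PP(W)$ and delete $h$ independent partial charts $C_s$, one for each target index $s\in S$. Each deletion perturbs $Q_B$ only in low degrees, and by Theorem B these perturbations add up with no (or only controlled pairwise) interaction. Concretely, I would take $V=W\oplus X_0\oplus X_1\oplus\cdots\oplus X_h$. The blocks $X_0,X_1,\dots$ will have dimensions of order $n$, which accounts for the term $n(1+\binom{h+2}{3})$ in the rank. The small space $W$ has dimension about $K$ and carries the vertex spaces $L_i$. For $s\in S$ I would choose $\ell_s=\dim L_s=s-1$, so that Theorem A bounds the support of $Q_{N_s}-Q_B$ through degree exactly $s$, with that bound sharp. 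Over $\F_2$ I would additionally choose the $L_i$ pairwise transversal, making $\dim(L_i\cap L_j)$ small, so that the pair interactions of Theorem~\ref{thm:composition} live only in degrees $\le1$ or $2$ and are negligible at the relevant scale. The first task is then to compute $Q_B$ explicitly. Its contractions are again complements of subspaces in projective spaces, so $Q_B$ should be expressible through Gaussian binomials $\gb{\cdot}{\cdot}$. From that formula I would read off that its coefficients $b_j$ decay like $q^{-\Theta(n j)}$ relative to $b_0$ and satisfy a strict Tur\'an inequality with margin: $b_j^2/(b_{j-1}b_{j+1})\ge 1+\epsilon$ uniformly. The degree $K$ of $Q_N$ is fixed by $\dim W$ and $R$, with $K\ge\max S+1$ leaving room for the top coefficient.

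The heart of the proof is an asymptotic evaluation of the top-degree correction $Q_{N_s}-Q_B$ in degree $s$, call it $d_s t^s$, together with its lower-order terms. This would use the locality proof of Theorem A, which expresses the change via contractions containing deleted points. I expect $d_s$ to have size comparable to $|C_s|\cdot(\text{local factor})$. Then $c_j=b_j+\sum_{s\ge j}(\text{correction}_s)_j$. The corrections are arranged so that $c_s$ is strongly depressed while $c_{s-1}$ and $c_{s+1}$ are not; alternatively, $c_s$ is boosted and its neighbours are boosted more. Either way $c_s^2/(c_{s-1}c_{s+1})\to0$, since the chart sizes scale like $q^{\Theta(n)}$ against $b_s$. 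Meanwhile, at indices $j\notin S$ the relative perturbation must be $o(1)$ against the $\epsilon$ margin. This is why the blocks $X_i$ for different $s$ need different sizes: the $\binom{h+2}{3}$ scaling suggests a hierarchy of block dimensions, chosen so that the perturbations at distinct indices in $S$ do not contaminate each other. Positivity and strict monotonicity of the $c_j$ follow from the same estimates, since every $c_j$ remains dominated by $b_j$ or by a single correction term of known sign. The bound $n\ge20K^2$ is what makes all the geometric-series error terms (ratios like $K^2q^{-n/\cdot}$) smaller than the needed margins.

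The remaining properties are more routine. Simplicity and $\F_q$-representability are immediate, since $N$ is a set of projective points. Spanning holds because $N$ contains all points outside $\PP(W)$ except for the thin charts. For 3-connectivity, I would show that any 2-separation $(A,A')$ of $N$ forces some flat $\mathrm{cl}(A)$ to meet $N$ in a union of components. This is impossible because every line through two points of $N$ off $W$ contains a third point of $N$, given that only charts of relative density $O(q^{-\Omega(n)})$ were removed. So $N$ is "almost" a projective geometry minus a subspace, and that object is 3-connected.

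The main obstacle will be the exact sign and leading asymptotic of the degree-$s$ correction $d_s$, together with its subleading contributions at degrees $s\pm1$. Theorem A only gives an upper bound on the degree, and its sharpness is stated for existence, not with an explicit leading constant. I would first compute the change via the defining recursion of $Q$ restricted to flats through a deleted point, where only flats containing a full translate of $L_s+\langle w_s\rangle$ contribute at top degree. I would then tune $\dim X_s$ so that this term dominates $b_s$ by a factor of $q^{\Theta(n)}$ while staying negligible relative to $b_{s-1}$.
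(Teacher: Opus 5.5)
Your framework (the Bose base $B=\PP(V)\setminus\PP(W)$ with $\dim W=K$, independent charts with external dimensions proportional to $n$, asymptotics in $x=q^n$, then an effective threshold) is the paper's. But you misidentify the mechanism that produces the signs, and your concrete choices would not give the statement.

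A chart does not perturb a single coefficient. Take a chart with vertex dimension $\ell$ and $\dim X=\lambda n$. By \eqref{eq:single-empty-count} and \eqref{eq:single-chart} it creates extra empty $j$-spaces for every $1\le j\le\ell+1$. Each such $c_j$ is raised above the base term $v_j\gb{K}{j}Z^{-j}$ by a relative factor of the same order $x^{\lambda}$; see \eqref{eq:active-leading}. The deficient traces and binary pair terms are of lower order, so no coefficient is ``depressed'' at leading order. Hence in your sum $c_j=b_j+\sum_{s\ge j}(\cdots)$, the chart-$s$ term at $j<s$ is relatively as large as at $j=s$. Your requirement of an $o(1)$ relative perturbation at $j\notin S$ therefore already fails at $j=1$.

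The negative determinant appears not at the top of a chart's support but at the first degree it misses. With your $\ell_s=s-1$, a single chart boosts $c_{s-1}$ and $c_s$ but not $c_{s+1}$. Then $c_s^2/(c_{s-1}c_{s+1})\asymp x^{\lambda}\to\infty$, and the failure moves to $s+1$. This is why the paper takes $\ell_a=s_a-2$, which needs $s_a\ge2$. You also have no way to compute these leading terms in every degree. In \eqref{eq:deletion} the contractions $N/e$ and $N/F$ through a chart point are no longer projective, so their $Q$ would need a separate induction. The paper avoids this with the ambient reconstruction identity (Lemma~\ref{lem:ambient-identity}) and the trace classification.

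The missing design idea is how to place several such edges, including consecutive ones. The paper nests the vertices $L_a=\langle e_1,\ldots,e_{\ell_a}\rangle$ and gives larger weight to smaller $s_a$. Then $c_j/u_R\sim H_jx^{\lambda(j)-Dj}$, where $\lambda(j)$ is the weight of the first active chart. The Tur\'an exponent at $s_a$ is $2\lambda_{a+1}-\lambda_a-\lambda_{\rm right}$. Strict convexity of $\lambda_a=\binom{h-a+2}{2}$ is what makes it negative when $s_{a+1}=s_a+1$. To see that convexity is needed, run the paper's construction with weights in arithmetic progression and $s,s+1,s+2\in S$. The exponent at $s$ then vanishes, and the leading constants give $H_s^2/(H_{s-1}H_{s+1})=v_s^2/(v_{s-1}v_{s+1})>1$, hence $\Delta_s>0$. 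At unselected indices the weight never drops on the left. Where it drops on the right the exponent is positive. Where all three weights agree, positivity comes from the leading constants in \eqref{eq:constant-weight-ratios}, not from smallness of a perturbation.

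Two smaller points. First, transversal vertices are impossible in a $K$-dimensional $W$ whenever $\ell_i+\ell_j>K$. The pair terms are harmless anyway, since their relative exponent $\lambda_a+\lambda_b-\lambda_*-Dj$ is negative. Second, your third-point argument for $3$-connectivity fails over $\F_2$. For $v\in X_0$ and $0\ne w\in W$, the line through $\langle v\rangle$ and $\langle v+w\rangle$ has its third point in $\PP(W)$. The paper instead shows that no two hyperplanes cover the retained points (Lemma~\ref{lem:projective-density-connectivity}).
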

For each field there is such a represented family; we do not claim one
matroid representable over every field. The displayed rank bound is
sufficient, not optimal. The size formula appears in
\eqref{eq:pattern-size}. Setting $S=\{i\}$ and $K=i+d$ gives any
absolute index $i\ge2$ and any distance $d\ge1$ below the actual top.
Thus failures can lie arbitrarily far from both ends and approach any
prescribed proportion of the coefficient depth. These conclusions use
the actual support, not the maximum degree allowed by rank.

The proof counts surviving traces in ambient subspaces. An incidence
identity reconstructs $Q$ from empty traces and traces whose rank is one
less than their ambient dimension. The only mixed deficient traces come
from pairs of charts over $\F_2$. To prescribe the signs, we choose nested
vertices and strictly convex weights for the external dimensions.
Leading exponents control selected indices, including consecutive ones;
leading coefficients control the remaining indices. Bounds on the
Laurent coefficients make this argument uniform for $n\ge20K^2$ and
also prove strict decrease.

\subsection{Relation to earlier work}
The conjecture concerns ordinary $Q$ \cite[Conjecture 4.2]{GX}.
Xie--Zhang proved it for paving matroids by proving real-rootedness of
the binomially weighted polynomial
$\sum_j\binom{\deg Q_M}{j}c_jt^j$ \cite[Theorems 1.2 and 1.3]{XZ}.
Braden--Ferroni--Matherne--Nepal later gave a counterexample to
real-rootedness of that weighted polynomial \cite[Theorem 5.3]{BFMN}.
Their example does not disprove ordinary-$Q$ log-concavity; see also
their Remark 5.4. We use their inverse deletion formula and KL-basis
deletion maps as algebraic inputs.

Cheng--Liu constructed projective deletions whose ordinary KL polynomials
$P$ are not unimodal over every finite field \cite{CL}. Their Theorem 1.1
evaluates $P$ under quotient-rank hypotheses, Lemma 2.3 describes missing
projective fibers, and Remark 2.7 treats the Bose--Burton geometries used
here. We use these antecedents, but a failure for $P$ does not transfer
to $Q$ by incidence inversion.

The $q$-cone construction and its catenary-data transformations are
established tools \cite{OW,BK}. Valuativity of $Q$
\cite[Theorem 8.8]{AS} and the elementary-split and Schubert formulas
\cite[Theorems 5.3 and 5.9]{FS} give other evaluation frameworks.
Here we evaluate the changes in ordinary $Q$ for projective charts,
including partial charts and binary pair interactions, and use these
formulas to prescribe the signs and sizes of internal Tur\'an ratios.

Sections~\ref{sec:conventions}--\ref{sec:ambient} establish the inputs and
locality mechanism; Sections~\ref{sec:composition}--\ref{sec:patterns}
prove composition and exact realization. The mathematical supplement
contains the comparison examples, thresholds for the complementary
family and first-index positive-class proofs. Section~\ref{sec:first-index-boundary}
proves a stronger first-index inequality for our construction and states
the remaining universal question.

\section{Conventions and algebraic inputs}\label{sec:conventions}
We use $P_M=Q_M=0$ when $M$ has a loop. The polynomial identities and
simplification assertions in this section concern loopless matroids;
in particular, the rank-zero case below is the empty matroid.
For a matroid $M$, write $L(M)$ for its lattice of flats, $M|F$ for
restriction, and $M/F$ for contraction. All represented matroids used in
the constructions are simple. Contractions by flats are loopless, and we
identify their simplifications when describing projective geometries.
The invariants depend only on the ranked lattice of flats, so simplification
does not change them. Ranks are vector dimensions: $\PG(r-1,q)$ has rank $r$.
For a vector space $V$, $\PP(V)$ denotes its projective points.
Put
\[
 g(a,b)=\gb ab,\qquad u_a=q^{\binom a2},\qquad u_0=1,
\]
with $g(a,b)=0$ outside $0\le b\le a$. Gaussian coefficients at $q=2$
will also be written $G_2(a,b)$. For loopless $M$, the characteristic polynomial is
$\chi_M(t)=\sum_{F\in L(M)}\mu(\varnothing,F)t^{r-\rk F}$.

In rank zero, $P_M=Q_M=1$; in positive rank $r$,
$\deg P_M,\deg Q_M<r/2$. The defining identities are
\begin{align}
 t^rP_M(t^{-1})&=\sum_F\chi_{M|F}(t)P_{M/F}(t),\label{eq:P-recursion}\\
 \widehat Q_M(t)&=\sum_Ft^{\rk F}\widehat Q_{M|F}(t^{-1})
                                      \chi_{M/F}(t),\label{eq:Q-recursion}
\end{align}
where $\widehat Q_M=(-1)^rQ_M$ \cite[Theorem 1.3 and equation (14)]{GX}.
Equivalently, $P*\widehat Q$ is the identity in the incidence algebra.
For positive-rank loopless $M$, in particular
\begin{equation}\label{eq:inversion}
 Q_M=\sum_{\varnothing\ne F\in L(M)}
                    (-1)^{\rk F+1}P_{M|F}Q_{M/F}.
\end{equation}
These are characterizations of the same ordinary polynomial, without a
binomial coefficient multiplier or coefficient normalization.
We also use palindromicity of $Z_M=\sum_Ft^{\rk F}P_{M/F}$
\cite[Theorem 2.1]{BFMN}.

For projective geometry, $P=1$ and $Q=u_r$. For example, induction on
rank and Gaussian symmetry show that $P=1$ makes
$Z=\sum_dg(r,d)t^d$ palindromic, so uniqueness gives $P=1$.
Incidence inversion then gives $Q=(-1)^r\mu(0,V)=u_r$.
The latter Moebius value follows from
$\chi_{\PG(r-1,q)}(t)=\prod_{a=0}^{r-1}(t-q^a)$.

Let $\tau(N)=[t^{(\rk N-1)/2}]P_N$ in odd rank, and let $\tau(N)=0$
in even rank. For a simple matroid $N$ and a noncoloop $e$, the deletion
identity of Braden--Ferroni--Matherne--Nepal is
\begin{equation}\label{eq:deletion}
 Q_{N\setminus e}-Q_N=-(1+t)Q_{N/e}
       +\sum_{F\in T_e}\tau((N|F)/e)t^{\rk F/2}Q_{N/F},
\end{equation}
where $T_e=\{F\in L(N):e\in F,\ F\setminus e\notin L(N)\}$
\cite[Theorem 1.4]{BFMN}. Only even-rank flats can contribute. The top
flat is included whenever it satisfies the definition.

\begin{lemma}\label{lem:odd-top}
If $\rk N=2a+1$, then $[t^a]Q_N=[t^a]P_N$.
\end{lemma}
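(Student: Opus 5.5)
We compare the coefficients of $t^a$ in the inversion formula \eqref{eq:inversion}, for $N$ loopless of rank $r=2a+1$:
\[
 Q_N=\sum_{\varnothing\ne F\in L(N)}(-1)^{\rk F+1}P_{N|F}Q_{N/F}.
\]
The degree bounds do most of the work. If $F$ has rank $s$ with $0<s<r$, then $\deg P_{N|F}<s/2$ and $\deg Q_{N/F}<(r-s)/2$. Hence $\deg P_{N|F}\le\lceil s/2\rceil-1$ and $\deg Q_{N/F}\le\lceil (r-s)/2\rceil-1$.

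Since $r$ is odd, exactly one of $s$ and $r-s$ is odd. Write $s=2b+1$ and $r-s=2(a-b)$, or the reverse. The sum of the two degree caps is then $b+(a-b)-1=a-1<a$. So every proper nonempty flat contributes nothing to $[t^a]$.

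Only the top flat $F=E$ remains. Its contribution is $(-1)^{r+1}P_{N}Q_{N/E}$. Here $N/E$ is the rank-zero (empty) matroid, so $Q_{N/E}=1$, and the sign is $(-1)^{2a+2}=+1$. Therefore $[t^a]Q_N=[t^a]P_N$.

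The one point to check carefully is the convention for contraction by the top flat. We need $Q$ of the rank-zero matroid to equal $1$, which is stated in the conventions. We also need the contractions $N/F$ to be loopless, which holds because $F$ is a flat. If $N$ has a loop, both sides vanish by convention, so we may assume $N$ is loopless. The statement is trivially true for $a=0$ as well, since both constant terms equal $1$ for a rank-one loopless matroid.

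The main obstacle, if any, is just making sure the strict inequalities $\deg<\rk/2$ give the sharp caps $\lceil s/2\rceil-1$ in both parities. When $s$ is even, $\deg<s/2$ means $\deg\le s/2-1$. When $s$ is odd, it means $\deg\le (s-1)/2$. This is what the ceiling expression encodes. With those caps, the bookkeeping above is routine.
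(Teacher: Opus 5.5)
Your proof is correct and is essentially the paper's argument. The paper reads off $[t^a]$ in the identity $P*\widehat Q=\delta$: every proper nonempty rank split has product degree at most $a-1$, and the two endpoint terms ($F=\varnothing$ contributing $-Q_N$, $F=E$ contributing $P_N$) cancel. This is exactly your computation, with the $F=\varnothing$ term already moved to the left-hand side in \eqref{eq:inversion}. Your parity bookkeeping of the caps $\lceil s/2\rceil-1$ and your treatment of the loop and rank-one cases are fine.
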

\begin{proof}
Every proper nonempty rank split in $P*\widehat Q$ has product degree
at most $a-1$, by the strict half-rank bounds. The two endpoint terms
at degree $a$ therefore cancel, with signs $+1$ and $-1$.
\end{proof}
This observation is also recorded in \cite[Theorem 4.1]{Vecchi}.

\section{Bose--Burton geometry and an explicit counterexample}
\label{sec:example}
Let $B(r,k;q)$ be the matroid on $\PP(V)\setminus\PP(W)$, where
$\dim V=r$ and $\dim W=k<r$.

\begin{lemma}\label{lem:bose-flats}
Nonempty flats of $B(r,k;q)$ correspond to subspaces $U\not\subseteq W$.
If $d=\dim U$ and $a=\dim(U\cap W)$, their restrictions are $B(d,a;q)$
and their contractions simplify to projective geometry of rank $r-d$.
\end{lemma}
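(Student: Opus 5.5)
The plan is to work with the ground set $E=\PP(V)\setminus\PP(W)$ of $B(r,k;q)$, where $V$ is represented in $\F_q^r$. First I describe the closure. Since $B(r,k;q)$ is a represented simple matroid, the closure of a subset $A\subseteq E$ is $\PP(\langle A\rangle)\cap E$. So every flat has the form $F_U=\PP(U)\setminus\PP(W)$ for a subspace $U\le V$. The flat is nonempty exactly when $U\not\subseteq W$, because otherwise every point of $\PP(U)$ lies in $\PP(W)$.

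For the bijection, I show that $U\mapsto F_U$ is injective on subspaces $U\not\subseteq W$, that is, $\langle F_U\rangle=U$. This is the only step with any content. Points of $U$ outside $W$ span $U$: $U\cap W$ is a proper subspace of $U$, and a vector space over any field is never a union of one proper subspace. Directly, fix $u\in U\setminus W$. For any $v\in U\cap W$, the vector $u+v$ lies in $U\setminus W$, so $v=(u+v)-u\in\langle F_U\rangle$. Hence $U\cap W\subseteq\langle F_U\rangle$, and together with $U\setminus W$ this gives all of $U$. The closure $\PP(U)\cap E$ is exactly $F_U$, so $F_U$ is a flat. Its rank is $\dim U=d$, and every nonempty flat arises this way.

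For the restriction, $B(r,k;q)|F_U$ is the matroid on $\PP(U)\setminus\PP(U\cap W)$ represented inside $U$. Since $\dim U=d$ and $\dim(U\cap W)=a<d$, this is by definition $B(d,a;q)$.

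For the contraction, I represent $B(r,k;q)/F_U$ by projecting along $U$. Each point $\langle v\rangle\in E\setminus F_U$ maps to the point $\langle v+U\rangle$ of $\PP(V/U)$, and points in the same fibre are parallel. It remains to show that every point of $\PP(V/U)$ is hit. Take $v\notin U$. If $v\notin W$, then $\langle v\rangle$ itself works. If $v\in W$, pick $u\in U\setminus W$; then $v+u\notin W$ (otherwise $u\in W$), and $v+u$ has the same image. So the simplification is $\PP(V/U)$, a projective geometry of rank $r-d$. This is also the reason the hypothesis $U\not\subseteq W$ matters.

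I do not expect a real obstacle; the only care needed is the spanning argument, which must be checked for $q=2$ as well, and the translation argument above is field-independent.
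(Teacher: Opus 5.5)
Your proof is correct and follows essentially the same route as the paper. The translation $v=(u+v)-u$ shows that the surviving trace spans $U$, which gives the flat correspondence. Completeness of the contraction then comes from the fact that no coset $v+U$ can lie entirely in $W$: your case split on $v\in W$ is the same observation as the paper's implication $z+U\subseteq W\Rightarrow U\subseteq W$, and you also make explicit the identification $\PP(U)\setminus\PP(W)=\PP(U)\setminus\PP(U\cap W)$ for the restriction.
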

\begin{proof}
Fix $v\in U\setminus W$. Every $w\in U\cap W$ is the difference of
the surviving vectors $v+w$ and $v$, so the surviving trace spans $U$.
Every nonempty flat is its ground-set intersection with its span, proving
the correspondence. A nonzero quotient direction in $V/U$ has normalized
representatives $z+U$. If all were missing, $z+U\subseteq W$ would imply
$U\subseteq W$, a contradiction. Every quotient direction therefore occurs.
\end{proof}

Writing $G_a(t)=\sum_jg(a,j)t^j$, the lemma gives
$Z_B=P_B+G_r-G_k$. Comparing opposite coefficients of $Z$ gives
\begin{equation}\label{eq:boseP}
 [t^j]P_{B(r,k;q)}=g(k,j)-g(k,r-j),\qquad 0\le j<r/2.
\end{equation}
This recovers the formula in \cite[Remark 2.7]{CL}.

\begin{proposition}\label{prop:boseQ}
For $0\le j<r/2$,
\begin{equation}\label{eq:boseQ}
 [t^j]Q_{B(r,k;q)}=
 \sum_{d=2j+1}^{r}(-1)^{d+1}u_{r-d}
 \bigl(g(k,j)g(r-j,d-j)-g(k,d-j)g(r-d+j,j)\bigr).
\end{equation}
\end{proposition}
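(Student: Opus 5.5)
The plan is to evaluate $Q_{B(r,k;q)}$ directly from the inversion formula \eqref{eq:inversion}, using Lemma~\ref{lem:bose-flats} to identify every restriction and contraction. Put $M=B(r,k;q)$. By Lemma~\ref{lem:bose-flats}, the nonempty flats $F$ correspond to subspaces $U\not\subseteq W$. For such a flat with $d=\dim U$ and $a=\dim(U\cap W)$:
\begin{itemize}
\item $M|F\cong B(d,a;q)$;
\item $M/F$ simplifies to $\PG(r-d-1,q)$, so $Q_{M/F}=u_{r-d}$, a constant (equal to $u_0=1$ for the top flat $d=r$).
\end{itemize}
Substituting into \eqref{eq:inversion} gives
\[
 [t^j]Q_M=\sum_{d=1}^{r}(-1)^{d+1}u_{r-d}\sum_{a<d}N(d,a)\,[t^j]P_{B(d,a;q)},
\]
where $N(d,a)$ counts the $d$-dimensional $U\le V$ with $\dim(U\cap W)=a$.

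Since $\deg P_{B(d,a;q)}<d/2$, only $d\ge 2j+1$ contributes, which is the lower limit in \eqref{eq:boseQ}. For those $d$ we have $j<d/2$, so \eqref{eq:boseP} gives $[t^j]P_{B(d,a;q)}=g(a,j)-g(a,d-j)$.

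The remaining work is a double count. Including $a=d$ (that is, $U\subseteq W$) is harmless, because then $g(d,j)-g(d,d-j)=0$ by Gaussian symmetry. So the inner sum may run over all $d$-dimensional $U\le V$:
\[
\sum_{\dim U=d}\bigl(g(\dim(U\cap W),j)-g(\dim(U\cap W),d-j)\bigr).
\]
I evaluate each of the two pieces by counting incident pairs.

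For the first piece, $\sum_U g(\dim(U\cap W),j)$ counts pairs $J\le U$ with $J\le W$ and $\dim J=j$. Choose $J$ first in $g(k,j)$ ways. Then $U\supseteq J$ of dimension $d$ can be chosen in $g(r-j,d-j)$ ways. This gives $g(k,j)g(r-j,d-j)$.

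For the second piece, $\sum_U g(\dim(U\cap W),d-j)$ counts pairs $J'\le U\cap W$ with $\dim J'=d-j$. Choose $J'$ first in $g(k,d-j)$ ways. Then $U\supseteq J'$ of dimension $d$ can be chosen in $g(r-d+j,j)$ ways. This gives $g(k,d-j)g(r-d+j,j)$.

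Substituting both counts yields exactly \eqref{eq:boseQ}.

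The step needing care is the bookkeeping around the excluded subspaces $U\subseteq W$. If one prefers to restrict to $U\not\subseteq W$ directly, the subtracted counts are
\[
g(k,j)g(k-j,d-j)=g(k,d)g(d,j)\qquad\text{and}\qquad g(k,d-j)g(k-d+j,j)=g(k,d)g(d,d-j).
\]
These are equal by symmetry, so they cancel, which is the same observation as above.

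One must also check the top flat $d=r$. It enters with $Q_{M/F}=1=u_0$ and coefficient $[t^j]P_M$, so the formula is self-consistent. This is legitimate because \eqref{eq:inversion} expresses $Q_M$ through $P_M$ itself, which is already known from \eqref{eq:boseP}. Finally, the range conventions $g(a,b)=0$ outside $0\le b\le a$ make every boundary case, such as $d-j>k$, automatic.
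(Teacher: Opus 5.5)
Your proposal is correct and is essentially the paper's proof: apply \eqref{eq:inversion} with Lemma~\ref{lem:bose-flats}, extend each rank-$d$ sum to all ambient $d$-subspaces (those inside $W$ contribute $g(d,j)-g(d,d-j)=0$), count pairs to get $\sum_{\dim U=d}g(\dim(U\cap W),h)=g(k,h)g(r-h,d-h)$ for $h=j$ and $h=d-j$, and use the half-rank degree bound to obtain the lower limit $d=2j+1$. Your extra checks on the top flat and on the cancelling counts $g(k,d)g(d,j)=g(k,d)g(d,d-j)$ are also correct; they spell out details that the paper leaves implicit.
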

\begin{proof}
Apply \eqref{eq:inversion} and Lemma~\ref{lem:bose-flats}. For each
rank $d$, extend the sum to all ambient $d$-subspaces: those contained in
$W$ contribute the formal difference $g(d,j)-g(d,d-j)=0$.
These added terms are zero; no rank-$d$ invariant is assigned to an
empty trace.
For $0\le h\le d$, counting pairs $A\le U$ with $A\le W$, $\dim A=h$
and $\dim U=d$ yields
\[
 \sum_{\dim U=d}g(\dim(U\cap W),h)=g(k,h)g(r-h,d-h).
\]
Use $h=j,d-j$ in \eqref{eq:boseP}, and use Gaussian symmetry.
The degree bound gives the lower endpoint $d=2j+1$.
\end{proof}

\begin{proposition}\label{prop:explicit}
The binary matroid in the introduction is simple of rank ten, has $952$
elements, and has
\begin{align*}
 Q_M(t)&=30307166191616+4194859712512t+609014784t^2+89280t^3,\\
 \Delta_2(M)&=-3618068002504704<0.
\end{align*}
\end{proposition}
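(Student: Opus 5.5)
We propose to compute $Q_M$ by starting from the Bose--Burton matroid $B=B(10,3;2)=\PP(V)\setminus\PP(W)$ and deleting the chart $S=\PP(X)\setminus\PP(H)$ one point at a time, using the deletion identity \eqref{eq:deletion}. The elementary facts are direct. Each element is a distinct nonzero vector of $\F_2^{10}$, so $M$ is simple. For the count, $|\PP(V)|=1023$ and $|\PP(W)|=7$. Since $X\cap W=0$, the deleted set $X\setminus H$ is disjoint from $W$ and has $2^7-2^6=64$ points. This gives $1023-7-64=952$. The rank is ten because $e_1+e_4,\dots$ together with $e_4,\dots,e_9$ and $e_1+e_{10},e_2+e_{10},\dots$ span $V$; alternatively, Theorem~A's hypothesis that $M$ spans $V$ is easily checked.

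To get the polynomial, we first evaluate $Q_B$ from Proposition~\ref{prop:boseQ} with $r=10$, $k=3$, $q=2$. This gives the four coefficients $j=0,\dots,4$ as finite sums of Gaussian binomials. We then write $M=B\setminus S$ and order the $64$ points of $S$ as $s_1,\dots,s_{64}$, with $N_i=B\setminus\{s_1,\dots,s_{i-1}\}$. Telescoping gives
\[
Q_M-Q_B=\sum_{i=1}^{64}\bigl(Q_{N_i\setminus s_i}-Q_{N_i}\bigr),
\]
and each summand is given by \eqref{eq:deletion} in terms of contractions $N_i/s_i$, $N_i/F$ and the top coefficients $\tau((N_i|F)/s_i)$ on even-rank flats $F\in T_{s_i}$. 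Here $W\cap X=0$, so $\ell=0$ in Theorem~A. Hence $Q_M-Q_B$ has degree at most $1$, and only the constant and linear coefficients of $Q_B$ change. The coefficients $609014784$ and $89280$ of $t^2,t^3$ then come directly from \eqref{eq:boseQ}, and the $t^4$ coefficient must vanish there. That vanishing is a check: \eqref{eq:boseP} and the formula give $g(3,4)=0$ terms.

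For the constant and linear terms, only the contributions $-(1+t)Q_{N_i/s_i}$ in degrees $0,1$ matter, together with $\tau$-terms from flats of rank $2$ (these give $t^1$) and rank $0$ (these are impossible). For the constant term, a cleaner route is to use the Möbius interpretation $c_0=(-1)^r\mu(\hat0,\hat1)$, that is, $c_0=|\chi_M(0)|$ up to sign. We will compute $\chi_M$ by counting: for each subspace $U$, the trace of $M$ in $U$ is determined by $\dim(U\cap W)$ and by $\dim U\cap X$ versus $\dim U\cap H$. Equivalently, $\chi_M(t)$ is computed via the finite-field method as the number of $y\in(\F_2^{10})^*$-type linear functionals avoiding all elements, so $\chi_M(2^s)$ counts $s$-tuples of functionals. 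For the linear coefficient we use \eqref{eq:inversion}: $c_1$ is assembled from $P_{M|F}$ and $Q_{M/F}$ with at most linear degree, where rank $\le3$ restrictions have $P=1$ and contractions are classified by the same trace data. We classify flats $U$ by the triple $(\dim U,\dim(U\cap W),\text{type of }U\cap X\text{ relative to }H)$ and count each class with Gaussian-binomial incidence counts, exactly as in the proof of Proposition~\ref{prop:boseQ}.

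The main obstacle is this linear coefficient: contractions $M/F$ are no longer projective or Bose--Burton in general, because missing directions from $X\setminus H$ survive in quotients. Their $[t^0]Q$ and $P$ must therefore be computed via their own characteristic polynomials. We would first try to show, by the argument of Lemma~\ref{lem:bose-flats}, that each quotient $V/U$ misses an affine-type set whose simplification is again of the form $B$ minus a chart. Its $\mu$ is then a product formula, and the sum collapses to a closed expression that can be evaluated as $4194859712512$. Finally, $\Delta_2=609014784^2-4194859712512\cdot89280$ is direct arithmetic, and it is negative.
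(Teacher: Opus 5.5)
Your setup agrees with the paper's: start from $Q_{B(10,3;2)}$ via Proposition~\ref{prop:boseQ}, delete the $64$ chart points one at a time, and telescope \eqref{eq:deletion}. The element count and spanning basis are fine, and final spanning makes every intermediate deletion a noncoloop deletion. The gap is that you never evaluate a deletion term. Instead you switch to a global computation through \eqref{eq:inversion}, and for $c_1$ that computation is not done. It needs $[t^1]P_{M|F}$ and $[t^1]Q_{M/F}$ for every nonempty flat $F$. Many of these contractions are genuinely non-projective; already $M/p$ for $p\in\PP(H)$ misses $32$ quotient points. You offer only the hope that they are again Bose matroids minus a chart and that the sum collapses to $4194859712512$. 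Your auxiliary claim that restrictions of rank at most three have $P=1$ is also false: the plane $\langle e_1,e_4,e_5\rangle$ restricts to $B(3,1;2)$, with six points and seven lines, so $P=1+t$. (Your route to $c_0$ does work. The subspaces all of whose nonzero vectors are missing are $0$, $71$ points, $7$ lines and $W$, so the finite-field method gives $c_0=2^{45}-71\cdot2^{36}+7\cdot2^{28}-2^{21}$, the stated constant. But you did not carry this out either.)

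The missing idea is that on the deletion route every contraction that occurs is a contraction of the current matroid $N$ by a flat containing the point $e$ being deleted, and all of these are projective. The reason is that no line through a chart point $e$ has both of its other points in the full missing set. Two points of $W$ sum into $W$, two chart points sum into $H$, and a point of $W$ plus a chart point has nonzero components in both $W$ and $X$. Hence every quotient direction of $N/e$ survives, $N/e$ simplifies to $\PG(8,2)$, and $Q_{N/e}=2^{36}$. Each $(N|F)/e$ is the restriction of $N/e$ to a flat, so it has $P=1$. Therefore $\tau((N|F)/e)=0$ unless $\rk F=2$, where it equals $1$; this includes the top flat. The line contractions simplify to $\PG(7,2)$, with $Q=2^{28}$. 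The same fact places the $7+j$ currently missing points on distinct lines among the $511$ lines through $e$, so exactly $504-j$ lines lie in $T_e$. Each step therefore changes $Q$ by $-(1+t)2^{36}+(504-j)2^{28}t$ for $0\le j\le63$. The total change $-2^{42}(1+t)+30240\cdot2^{28}t$ converts the Bose coefficients into the stated ones and leaves degrees two and three unchanged. Neither Theorem~A nor a classification of the flats of $M$ is needed. Your observation that the contractions $M/F$ of the final matroid are not projective is correct, but it does not arise on this route.
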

\begin{proof}
Its $1023-7-64=952$ distinct nonzero columns include the surviving basis
\[
 e_4,e_5,e_6,e_7,e_8,e_9,
 e_1+e_4,e_2+e_4,e_3+e_4,e_1+e_{10}.
\]
This basis persists at every earlier deletion, so all $64$ deletions are
noncoloop deletions. Proposition~\ref{prop:boseQ} gives the starting polynomial
\[
 Q_{B(10,3;2)}=34705212702720+475418034176t+609014784t^2+89280t^3;
\]
the permitted coefficient of degree four is zero.

Suppose $j$ affine points have been removed and $e$ is the next point.
No line through $e$ has both other points in the full missing set:
two points in $W$ sum into $W$, two in the chart sum into $H$, and a
point in $W$ plus a chart point has a nonzero $W$ component unless the
$W$ point is zero. Thus every quotient direction survives and $N/e$
simplifies to $\PG(8,2)$. For every flat $F$ containing $e$, its pointed
restriction $(N|F)/e$ corresponds to a flat restriction of $N/e$.
It has $P=1$, so its $\tau$ vanishes unless $\rk F=2$, including the
top-flat case. The line contractions have $Q=u_8=2^{28}$.

The point $e$ lies on $511$ binary lines. The $7+j$ previously missing
points occupy distinct lines through it. Exactly $504-j$ lines retain
both other points, which is the condition for membership in $T_e$.
Consequently \eqref{eq:deletion} gives
\[
 Q_{N\setminus e}-Q_N=-(1+t)2^{36}+(504-j)2^{28}t,
 \qquad 0\le j\le63.
\]
Summing gives the displayed polynomial. Finally
$609014784^2-4194859712512\cdot89280=-3618068002504704$.
The four coefficients are positive and strictly decreasing.
\end{proof}

The original geometric construction is due to Cheng--Liu, who applied it
to $P$ \cite{CL}; the failed inequality here concerns ordinary $Q$.
For comparison, deleting the shifted chart
$\{w_0+x:0\ne x\in X\}$ instead gives a rank-ten binary matroid on $889$
elements with
\[
 Q=25977839157248+6782309072896t+609014784t^2+89280t^3
\]
and $\Delta_2=-234625546897588224$. An intersecting construction gives
rank-eight examples over every $q>2$, with $3025$ elements at $q=3$.
The complementary family has the sharp thresholds $k\ge3$, $r\ge2k+4$
over $\F_2$ and $r\ge2k+3$ otherwise. Complete proofs and the distinct
characteristic-recursion verifications of these examples are in the
mathematical supplement. These comparisons concern specified families;
no globally minimum rank or size is asserted.

\section{Locality for projective chart deletion}\label{sec:locality}
The linear correction in Proposition~\ref{prop:explicit} is the first case
of a support bound controlled by the intersection of two ambient subspaces.
We begin with the simpler support estimate needed for the proof.

\begin{lemma}\label{lem:deletion-support}
If $U<V$, $\dim U=w$, and $D\subseteq\PP(U)$, then for
$K=\PP(V)\setminus D$ one has $\deg P_K,\deg Q_K\le w$.
\end{lemma}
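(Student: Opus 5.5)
The plan is to compare $K$ with the full projective geometry $\PG=\PP(V)$, for which $P=1$ and $Q=u_r$, and to show that every discrepancy in the defining sums is confined to degrees at most $w$. Put $r=\dim V$. If $w\ge r/2$ there is nothing to prove, by the strict half-rank bound, and the same remark disposes of $w=r-1$, so we may assume $w<r/2$. The argument is an induction on $r$ inside the class of matroids of the form $\PP(V')\setminus D'$ with $D'\subseteq\PP(U')$ for a proper subspace $U'<V'$. Note first that $K$ spans $V$: the complement of a proper subspace spans, and $\PP(V)\setminus\PP(U)\subseteq K$.

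\emph{Step 1: flats of $K$.} I would follow the argument of Lemma~\ref{lem:bose-flats}. For a subspace $A\not\subseteq U$, the trace $A\cap K$ contains $\PP(A)\setminus\PP(A\cap U)$, which spans $A$. Hence every such $A$ is a flat of $K$, and it is the same flat as in $\PG$. Consequently, all differences between the lattices of flats of $K$ and of $\PG$ come from subspaces $A\subseteq U$, and these have rank at most $w$.

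\emph{Step 2: restrictions and contractions.} For a flat spanning $A$, the restriction $K|A$ is $\PP(A)$ minus $D\cap\PP(A)$, which again lies in the class. If $A=V$, the class is not needed, since this is the top term. For the contraction $K/A$, a quotient direction $z+A$ is missing only if every point of $\langle z,A\rangle\setminus A$ lies in $D\subseteq\PP(U)$, which forces $\langle z,A\rangle\subseteq U$ and so $A\subseteq U$. Thus, when $A\not\subseteq U$, the simplification of $K/A$ is a complete projective geometry, with $P=1$ and $Q=u_{r-\dim A}$. When $A\subseteq U$ with $\dim A=m$, the missing directions lie in $\PP(U/A)$, whose underlying space has dimension $w-m<r-m$. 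Induction then gives that $P_{K/A}$ and $Q_{K/A}$ have degree at most $w-m$, with $P_{K/A}-1$ also of degree at most $w-m$.

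\emph{Step 3: the bound for $P$.} I would use palindromicity of $Z_K=\sum_F t^{\rk F}P_{K/F}$. Consider $Z_K-Z_{\PG}$, whose terms come from two sources:
\begin{itemize}
\item flats $A\subseteq U$, which are present in one lattice and possibly absent or different in the other, and which contribute $t^{m}P_{K/A}$ of degree at most $m+(w-m)=w$;
\item the bottom flat $\varnothing$, which contributes $P_K-1$.
\end{itemize}
All flats $A\not\subseteq U$ contribute identical terms $t^{\dim A}$ to both sums. Now fix $j$ with $w<j<r/2$. At $t^{r-j}$ we have $r-j>w$, so the coefficients of $Z_K$ and $Z_{\PG}$ agree. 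At $t^j$ the same holds apart from the $\varnothing$ term. Palindromicity of both $Z_K$ and $Z_{\PG}$ then forces $[t^j](P_K-1)=0$. Hence $\deg P_K\le w$.

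\emph{Step 4: the bound for $Q$.} Apply \eqref{eq:inversion} to both $K$ and $\PG$ and subtract; in $\PG$ every summand is a constant. The top flat contributes $P_K\cdot1$, which has degree at most $w$ by Step~3. A proper flat $A\not\subseteq U$ contributes $P_{K|A}\,u_{r-\dim A}$. The restriction $K|A$ belongs to the class with missing subspace $A\cap U$, so by induction $P_{K|A}-1$ has degree at most $\dim(A\cap U)\le w$. A flat $A\subseteq U$ of rank $m$ contributes $P_{K|A}Q_{K/A}$. Here $\deg P_{K|A}<m/2\le m$ and $\deg Q_{K/A}\le w-m$, so this product has degree at most $w$. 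The difference $Q_K-u_r$ is therefore supported in degrees at most $w$.

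The main obstacle is Step~2 together with the bookkeeping of which subspaces $A\subseteq U$ are genuine flats of $K$. I would avoid deciding this case by case: every term indexed by a subspace of $U$ already has degree at most $w$, whichever way the question is settled. The only delicate point is to make sure the inductive hypothesis applies, namely that the relevant missing set lies in a \emph{proper} subspace of the smaller ambient space. This holds because $U/A$ is proper in $V/A$, and $A\cap U$ is proper in $A$ whenever $A\not\subseteq U$.
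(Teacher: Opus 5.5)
Your proof is correct. For $P$ it is essentially the paper's argument. Both compare the coefficients of the palindromic $Z_K$ at $t^j$ and $t^{r-j}$ for $w<j<r/2$. You encode the Gaussian symmetry of subspace counts as palindromicity of $Z_{\PG}$. You also induct on $r$ rather than on $w$, which works equally well because contraction by a nonempty flat inside $U$ lowers the rank.

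For $Q$ you take a genuinely different route. The paper inducts on $w$ and on the number of deleted points, applies the Braden--Ferroni--Matherne--Nepal deletion formula \eqref{eq:deletion}, and bounds the point term $-(1+t)Q_{N/e}$ and each $T_e$-term through $\tau$ of the pointed restrictions $(N|F)/e$. You instead apply the incidence inversion \eqref{eq:inversion} once and bound each summand:
\begin{itemize}
\item the top flat, by the $P$-bound just proved at rank $r$;
\item proper flats $A\not\subseteq U$, by the inductive bound $\deg P_{K|A}\le\dim(A\cap U)$ together with the projective contraction;
\item flats $A\subseteq U$, by the half-rank bound on $P_{K|A}$ times the inductive bound $\deg Q_{K/A}\le w-\dim A$.
\end{itemize}
In fact every summand already has degree at most $w$, so subtracting the $\PG$ sum is unnecessary.

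Your route is more elementary: it uses only the defining relation, with no noncoloop checks, no $T_e$, and no $\tau$. The paper's route rehearses the one-point deletion mechanism it reuses in Theorem~\ref{thm:locality}. There only the difference $Q_M-Q_B$ is small, and the individual inversion summands need not have small degree.

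Two minor points. In Step~2, $K|A$ for a flat spanning some $A\subseteq U$ need not lie in your class, since $D\cap\PP(A)$ may span $A$. You only use class membership when $A\not\subseteq U$, so nothing breaks. In Step~3, agreement at $t^{r-j}$ also needs the remark that the $\varnothing$-term $P_K-1$ has degree below $r/2<r-j$.
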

\begin{proof}
All points outside $U$ remain and span $V$. A subspace $T\not\subseteq U$
is spanned by its surviving trace, and contraction by that trace has
complete projective simplification: an affine coset of $T$ cannot lie in
$U$. If a flat has span $T\le U$ of dimension $d$, all missing quotient
directions lie in $U/T$, of dimension $w-d$.

For $P$, induct on $w$. In the nonempty-flat part of $Z_K$, a flat
spanned inside $U$ contributes in degrees at most $d+(w-d)=w$.
Every other flat contributes only $t^d$, since its contraction has $P=1$.
For $w<j<\rk K/2$, every ambient subspace of dimensions $j$ and
$\rk K-j$ lies outside $U$, and their numbers are equal by Gaussian
symmetry. Comparing these coefficients of the palindromic $Z_K$ forces
$[t^j]P_K=0$. If the interval is empty the half-rank bound already suffices.
The case $w=0$ is projective geometry.

For $Q$, induct on $w$ and then on the number of deleted points.
Delete a next point $e\in U$. It is a noncoloop because the outside-$U$
points still span. Missing directions in $N/e$ are supported in $U/\langle
e\rangle$, so $\deg Q_{N/e}\le w-1$. For a potentially contributing
rank-$2a$ flat $F$ with span $T\not\subseteq U$, the contraction $N/F$
is projective, while $(N|F)/e$ has missing support of dimension at most
$w-1$. The established $P$ bound makes its top coefficient zero unless
$a-1\le w-1$, giving degree at most $a\le w$ after the shift.
If $T\le U$, then $2a\le w$ and $\deg Q_{N/F}\le w-2a$;
the shifted term has degree at most $w-a$. All terms of
\eqref{eq:deletion} have degree at most $w$. The top flat is in the
first case and its rank-zero quotient has $Q=1$.
\end{proof}

\begin{theorem}[Partial-chart locality]\label{thm:locality}
Let $q$ be a prime power, $\dim V=R$, $W<V$, and let $X$ be a hyperplane
of $Y\le V$. Put $C=\PP(Y)\setminus\PP(X)$, and let
$S\subseteq C\setminus\PP(W)$ be arbitrary. Suppose
$M=(\PP(V)\setminus\PP(W))\setminus S$ spans $V$. With
$B=\PP(V)\setminus\PP(W)$ and $\ell=\dim(W\cap X)$,
\[
 \deg(Q_M-Q_B)\le\min\{\ell+1,\lfloor(R-1)/2\rfloor\}.
\]
The bound $\ell+1$ is sharp for every $\ell\ge0$ and every finite field.
\end{theorem}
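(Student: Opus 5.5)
We delete the points of $S$ one at a time, starting from $B$, and bound the degree of each single-point change using the deletion identity \eqref{eq:deletion}. The term $\lfloor(R-1)/2\rfloor$ in the minimum is automatic: both $Q_M$ and $Q_B$ have rank $R$ and hence degree $<R/2$. So everything rests on the bound $\ell+1$.

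\emph{Setup and noncoloops.} Order $S=\{e_1,\dots,e_s\}$ and set $N_0=B$, $N_i=N_{i-1}\setminus e_i$. Every intermediate $N_i$ contains $M$, which spans $V$, so each $e_i$ is a noncoloop. Thus
\[
 Q_{N_i}-Q_{N_{i-1}}=-(1+t)Q_{N_{i-1}/e_i}+\sum_{F\in T_{e_i}}\tau\bigl((N_{i-1}|F)/e_i\bigr)t^{\rk F/2}Q_{N_{i-1}/F},
\]
and it suffices to show that every term on the right has degree at most $\ell+1$.

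\emph{Missing directions.} Fix $N=N_{i-1}$ and $e=e_i=\langle y\rangle$ with $y\in Y\setminus X$. The missing set of $N$ is contained in $\PP(W)\cup C$. For the contraction $N/e$, a quotient direction $\langle z\rangle+\langle e\rangle$ in $V/\langle e\rangle$ is missing only if every representative $z+\lambda y$ lies in $W\cup(Y\setminus X)$. I will show that the directions that can be entirely missing lie in the image of $\langle e\rangle+(W\cap Y)$. Indeed, if some representative lies in $W\setminus Y$, then the others lie outside $Y$, hence in $W$, so $y\in W$, which is impossible since $e\notin\PP(W)$. The remaining representatives lie in $Y$, so the missing directions sit inside $(Y\cap(W+\langle y\rangle))/\langle y\rangle$.

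This has dimension $\dim(W\cap Y)$. That is $\ell$ or $\ell+1$, since $X$ is a hyperplane of $Y$. The case $\ell+1$ is the one where $W\cap Y\not\subseteq X$, and there I need one more step: a mixed line through $e$, a $W$-point and a chart point. Such a line $\langle y, w\rangle$ has its third point $y+\lambda w$, which lies in the chart only when $w\in X$, or else is counted differently. I expect that a careful count gives missing support of dimension at most $\ell$, namely the image of $W\cap X$.

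Granting this, Lemma~\ref{lem:deletion-support} gives $\deg Q_{N/e}\le\ell$, so the term $(1+t)Q_{N/e}$ has degree at most $\ell+1$.

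\emph{Flat terms.} For $F\in T_e$ of rank $2a$ with span $T$, I repeat the two cases from the proof of Lemma~\ref{lem:deletion-support}.
\begin{itemize}
\item If the missing quotient directions of $(N|F)/e$ have support of dimension at most $\ell$, then the $P$-bound of Lemma~\ref{lem:deletion-support} gives $\tau=0$ unless $a-1\le\ell$. The shift $t^a$ then contributes degree $a\le\ell+1$, provided $\deg Q_{N/F}\le\ell+1-a$.
\item To get that bound on $Q_{N/F}$, I use that the missing directions of $N/F$ lie in the image of $W\cap X$ (or of $W\cap Y$) modulo $T$. Since $T\ni y$, the $2a$ dimensions of $T$ cut this support down to dimension at most $\ell+1-a$. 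Lemma~\ref{lem:deletion-support} then gives $\deg Q_{N/F}\le\ell+1-a$, so the term has degree at most $\ell+1$.
\end{itemize}

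\emph{Main obstacle.} The hardest step is pinning down the missing support of the contractions by exactly $\ell$ rather than $\dim(W\cap Y)$. The contractions are not projective geometries (they are partial charts), so I would rely on Lemma~\ref{lem:deletion-support} being stated for arbitrary $D\subseteq\PP(U)$.

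\emph{Sharpness.} I would take $S$ to be a full chart with $X\le W$-compatible geometry, generalizing Proposition~\ref{prop:explicit}, which is the case $\ell=0$ with a nonzero $[t^1]$ change. For general $\ell$, choose $R$ large and compute the top coefficient $[t^{\ell+1}]$ of the summed deletion changes. It should be a positive multiple of a count of $\ell$-dimensional configurations times $u$-powers, hence nonzero over every $\F_q$.
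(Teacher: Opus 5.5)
There is a genuine gap, and it is exactly the step you flag and then grant: you never show that the missing quotient directions of $N/e$ and of $N/F$ lie in the image of $L=W\cap X$, with the right dimension. Your bound by $\dim(W\cap Y)$ only gives $\deg(1+t)Q_{N/e}\le\ell+2$.

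\emph{The point term.} The mixed-line count is a detour; the fix is short. Once you know the line $z+\F_q y$ lies in $Y$, it meets the hyperplane $X$ of $Y$ in exactly one vector, because $y\notin X$. That vector is missing and lies in $X$, so it is not a chart vector and must lie in $W\cap X=L$. Hence every missing direction of $N/e$ has a representative in $L$. The support is therefore $\pi(L)$, of dimension $\ell$, and the case $\dim(W\cap Y)=\ell+1$ costs nothing.

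\emph{The flat terms.} Here the gap is more serious. Your claim that the $2a$ dimensions of $T\ni y$ cut the support down to dimension $\ell+1-a$ is unjustified. The image of $L$ in $V/T$ has dimension $\ell-\dim(T\cap L)$, and nothing you say rules out $T\cap L=0$. That would let $t^aQ_{N/F}$ reach degree $a+\ell$. The missing ingredient is Lemma~\ref{lem:pointed-support}, which runs as follows.
\begin{itemize}
\item Let $v+T$ be a wholly missing coset. If some $u\in v+T$ lay outside $Y$, then $u$ and $u+y$ would both be missing vectors outside $Y$, hence in $W$, forcing $y\in W$. So $v+T\subseteq Y$.
\item Since $y\in T\setminus X$, this coset meets $X$ in an affine subspace of dimension $2a-1$. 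Its vectors are missing vectors of $X$, so they lie in $L$.
\item Therefore $T\cap X\le L$ and $\dim(T\cap L)=2a-1$.
\end{itemize}
So either $N/F$ is projective, or its missing support has dimension $\ell-2a+1$ and the shifted term has degree at most $\ell-a+1$.

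\emph{Smaller points.}
\begin{itemize}
\item If the support $H$ of $(N|F)/e$ is all of $A/\langle e\rangle$, Lemma~\ref{lem:deletion-support} does not apply, since it needs a proper subspace. The paper instead uses $2a-1=h\le\ell$ in that case.
\item Applying Lemma~\ref{lem:deletion-support} to $N/F$ needs $\ell-2a+1<R-2a$, i.e.\ $\ell\le R-2$. The case $\ell=R-1$ is covered by the half-rank bound.
\end{itemize}

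\emph{Sharpness.} Your sharpness paragraph is a heuristic, not a proof. The paper takes $R=2\ell+3$, $W=X$ of dimension $\ell$, $Y=X\oplus\langle y\rangle$ and $S=C$. Then $B=B(R,\ell;q)$ has $\deg Q_B\le\ell$ by Lemma~\ref{lem:deletion-support}, while $M=B(R,\ell+1;q)$. By \eqref{eq:boseP} the top allowed coefficient $[t^{\ell+1}]P_M$ equals $1$, and Lemma~\ref{lem:odd-top} turns this into $[t^{\ell+1}]Q_M=1$. No large-rank computation is needed.
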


\begin{lemma}\label{lem:pointed-support}
During any order of these deletions let $e$ be the next point, $N$ the
current matroid, $F$ a flat containing $e$, and $A=\langle F\rangle$,
$d=\dim A$. Put $L=W\cap X$. If $N/F$ has a missing nonzero quotient
direction, then
\[
 A\cap W=A\cap X=A\cap L,\qquad \dim(A\cap L)=d-1.
\]
Every missing quotient direction lies in the image of $L$, of dimension
$\ell-d+1$. Otherwise the quotient simplifies to full projective geometry.
\end{lemma}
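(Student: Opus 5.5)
The plan is to work directly with vectors. Fix $z\in V\setminus A$ and put $U=\langle A,z\rangle$, so $\dim U=d+1$. The quotient direction $z+A$ of $N/F$ is missing exactly when every projective point of $U$ outside $A$ is missing from $N$. The current matroid is obtained from $B$ by deleting points of $S\subseteq C\setminus\PP(W)$. So the missing condition says that every vector $u\in U\setminus A$ lies in $W$ or in $Y\setminus X$. The whole argument uses only this condition and the two facts $e\in A$, $e\in Y\setminus X$, $e\notin W$, which hold because $e$ is a point of the chart outside $W$. I will not need to know which chart points have already been deleted, which is why any deletion order is allowed.

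\emph{Step 1: $U\le Y$.} Take $u\in U\setminus A$. Then $u+e\in U\setminus A$ as well. If $u\in W$, then $u+e\notin W$ because $e\notin W$, so $u+e\in Y\setminus X$, and hence $u=(u+e)-e\in Y$. Otherwise $u\in Y\setminus X$ directly. Thus $U\setminus A\subseteq Y$. Since $U\setminus A$ spans $U$, we get $U\le Y$, and in particular $A\le Y$.

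\emph{Step 2: $U\cap X\le L$.} Because $U\le Y$ and $e\in U\setminus X$, the space $U\cap X$ is a hyperplane of $U$ of dimension $d$, and $A\cap X$ is a hyperplane of $A$ of dimension $d-1$. A vector of $(U\cap X)\setminus A$ lies in $X$, so it cannot lie in $Y\setminus X$, and the missing condition forces it into $W$. The set $(U\cap X)\setminus(A\cap X)$ is the complement of a proper subspace, so it spans $U\cap X$. Hence $U\cap X\le W\cap X=L$, and so $A\cap X\le A\cap L$.

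\emph{Step 3: the equalities and the dimension.} To show $A\cap W\le X$, suppose $w\in U\cap W$ with $w\notin X$. Then $U=(U\cap X)+\langle w\rangle\le W$, which contradicts $e\notin W$. Therefore $A\cap W\le A\cap X\le A\cap L\le A\cap W$, so all three spaces coincide, and $\dim(A\cap L)=\dim(A\cap X)=d-1$.

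\emph{Step 4: location of the missing directions.} The direction $z+A$ also has the representative $U\cap X\not\le A$, and Step 2 gives $U\cap X\le L$. So every missing direction lies in $(L+A)/A\cong L/(A\cap L)$, which has dimension $\ell-(d-1)=\ell-d+1$. If no direction is missing, every coset is represented by surviving points. Then the contraction by the flat $F$, whose surviving trace spans $A$ as in Lemma~\ref{lem:bose-flats}, simplifies to the full projective geometry on $V/A$.

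I expect the main obstacle to be Step 1. A naive covering argument, covering $U$ by $A$, $U\cap W$ and $U\cap Y$, fails over $\F_2$, where three subspaces can cover a space. The translation $u\mapsto u+e$ by the deleted point itself is what excludes this, so I would settle this step first before doing the rest.
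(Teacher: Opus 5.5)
Your proof is correct and takes essentially the paper's route: translating a missing vector by the deleted point $e$ puts the missing coset inside $Y$, intersecting with the hyperplane $X$ forces it into $L=W\cap X$, and a dimension count gives the image of $L$. The differences are cosmetic: you work with the $(d+1)$-space $U=\langle A,z\rangle$ instead of the normalized coset $v+A$, and you get $A\cap W\le X$ from $U\cap W\le X$ rather than from $A\cap W$ being proper in $A$.
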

\begin{proof}
Choose a representative $e\in Y\setminus(W\cup X)$. All missing nonzero
vectors lie in $W\cup(Y\setminus X)$; this uses all nonzero scalar multiples
of the chart representatives, not just one affine slice. A missing quotient
direction has a wholly missing normalized affine coset $v+A$, not containing
zero. If some $v$ in this coset were outside $Y$, so would $v+e$.
Both would belong to $W$, contradicting $e\notin W$. Hence $v+A\subseteq Y$.
Since $e\in A\setminus X$, its intersection with $X$ is an affine hyperplane
of dimension $d-1$. Its vectors must lie in $W\cap X=L$. Its direction
$A\cap X$ is therefore contained in $L$ and has dimension $d-1$.
Since $e\notin W$, $A\cap W$ is proper in $A$, proving the equalities.
A vector in $(v+A)\cap X$ gives a representative in $L$; rank-nullity
gives the dimension of its quotient image.
\end{proof}

\begin{proof}[Proof of Theorem~\ref{thm:locality}]
If $\ell=R-1$, the half-rank bound suffices. Assume $\ell\le R-2$.
Final spanning ensures that each deletion is a noncoloop deletion.
Lemma~\ref{lem:pointed-support} for $F=\{e\}$ puts the missing directions
of $N/e$ in a proper $\ell$-dimensional subspace. Thus
$\deg Q_{N/e}\le\ell$, and the point term in \eqref{eq:deletion} has
degree at most $\ell+1$.

Consider a rank-$2a$ flat $F$ containing $e$, with span $A$, and let
$\pi:V\to V/\langle e\rangle$. The pointed restriction $(N|F)/e$
is the restriction of $N/e$ to $A/\langle e\rangle$; its missing directions
are supported in
\[
 H=(A/\langle e\rangle)\cap\pi(L),\qquad h=\dim H\le\ell.
\]
If this support is the whole restriction space, $2a-1=h\le\ell$.
If it is proper, Lemma~\ref{lem:deletion-support} bounds the $P$ degree by
$h$, so a nonzero $\tau$ requires $a-1\le h\le\ell$.
In both cases $a\le\ell+1$. If $N/F$ is projective, its shifted term
has degree at most $a$, including a possible top flat.
Otherwise Lemma~\ref{lem:pointed-support} puts its missing directions
in dimension $k=\ell-2a+1$, proper because
$(R-2a)-k=R-\ell-1\ge1$. The shifted degree is at most
$a+k=\ell-a+1\le\ell$. Every deletion correction has the required support.

For sharpness set $R=2\ell+3$, choose $W=X$ of dimension $\ell$ and
a one-dimensional extension $Y$, and delete all of $C$.
Then $B=B(R,\ell;q)$ and $M=B(R,\ell+1;q)$. The former has degree at most
$\ell$ by Lemma~\ref{lem:deletion-support}; by \eqref{eq:boseP} the top
allowed coefficient of $P_M$ is one. Lemma~\ref{lem:odd-top} gives
$[t^{\ell+1}]Q_M=1$, proving equality in the support bound.
\end{proof}

The hypothesis permits arbitrary partial charts. Complete projective point
contractions are sufficient for a linear correction but are not needed for
this theorem: incomplete contractions are controlled by their missing support.
The finite-field argument includes extension fields without modification.

\section{An ambient reconstruction identity}\label{sec:ambient}
Locality bounds the coefficients that may change. To compute the combined
change from several charts, we count the ambient subspaces whose surviving
traces are empty or fail to span by one dimension.
For a spanning restriction $N$ of $\PP(V)$, put
\[
 F(T)=E(N)\cap\PP(T),\quad k(T)=\rk F(T),\quad
 \delta(T)=\dim T-k(T),\qquad T\le V.
\]
The trace $F(T)$ is a flat of $N$, even when it does not span $T$.

\begin{lemma}[Ambient reconstruction]\label{lem:ambient-identity}
Suppose every nonempty trace has $\delta(T)\le1$, and let $E_b$ count
empty $b$-subspaces, including $E_0=1$. For $R=\dim V$, define
\begin{equation}\label{eq:ambient-kernel}
 \mathcal K_{R,b}(t)=
 \sum_{a=0}^{\min(b,R-b)}(-1)^{a+b}g(R-b,a)u_{R-b-a}
              \bigl(t^b+t^a-\ind_{a=b}t^b\bigr).
\end{equation}
Then
\begin{equation}\label{eq:ambient-identity}
 Q_N(t)=\sum_b E_b\mathcal K_{R,b}(t)
 -\sum_{\substack{T\le V:\dim T=2j\\k(T)=2j-1>0}}
           \tau(N|F(T))u_{R-2j}t^j.
\end{equation}
\end{lemma}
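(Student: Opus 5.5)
The plan is to start from the inversion formula \eqref{eq:inversion} and convert its sum over nonempty flats into a sum over ambient subspaces, in the same way as in the proof of Proposition~\ref{prop:boseQ}. The conversion uses the map $T\mapsto F(T)$. Under the hypothesis $\delta\le1$, every nonempty flat $F$ of $N$ has a unique spanning subspace $T=\langle F\rangle$ with $\delta(T)=0$. The only other subspaces with the same trace are the deficient ones, $\delta(T)=1$, and these contain $\langle F\rangle$ as a hyperplane. I would first record the local structure forced by $\delta\le1$:
\begin{itemize}
\item a quotient direction of $N/F$ is missing exactly when the $(\dim T+1)$-subspace through it has trace $F$;
\item an ambient subspace $U\le T$ contributes a missing point of $N|F$ exactly when $U$ is empty, or when its trace is a hyperplane of $U$.
\end{itemize}
Thus both $P_{N|F}$ and $Q_{N/F}$ are determined by the counts of empty and deficient subspaces in the relevant intervals of the subspace lattice.

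Next I would compute $P$ for every such restriction from palindromicity of $Z$ \cite[Theorem 2.1]{BFMN}, exactly as in \eqref{eq:boseP}. Writing $Z_{N|F}=\sum_U t^{\dim U}P_{(N|F)/F(U)}$ and comparing the coefficients of $t^j$ and $t^{d-j}$, the complete-geometry part cancels by Gaussian symmetry. This leaves $[t^j]P$ as a signed count of empty $j$-subspaces minus empty $(d-j)$-subspaces, together with deficient contributions. The one coefficient not fixed this way is the middle coefficient in odd rank, which is precisely $\tau$. For $Q$ of the contractions I would induct on $R$. Each $N/F$ again satisfies $\delta\le1$ after simplification, since its ambient space is $V/T$ and its empty subspaces are the images of deficient subspaces over $T$. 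So the identity can be assumed for it, giving $Q_{N/F}$ as a combination of the $u$'s and $g$'s with the same kernel.

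Substituting both expansions into \eqref{eq:inversion}, the whole expression becomes linear in the indicator data (empty subspaces, deficient subspaces). I would then check the coefficient of each datum separately by double counting pairs $A\le U$, using the identity $\sum_{\dim U=d}g(\dim(U\cap W),h)=g(k,h)g(r-h,d-h)$ from the proof of Proposition~\ref{prop:boseQ}. The empty $b$-subspaces should collect exactly the kernel $\mathcal K_{R,b}$. Its two terms $t^b$ and $t^a$ come from the two opposite Gaussian coefficients in the $P$-formula, and the indicator $\ind_{a=b}$ removes the double count at the middle. The factor $(-1)^{a+b}g(R-b,a)u_{R-b-a}$ is the alternating sum over superspaces, with $u_{R-b-a}=Q$ of the complete quotient geometry. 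Deficient subspaces of non-middle type should cancel telescopically between the flat counted at $\langle F\rangle$ and its deficient extensions.

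The main obstacle is the residual term. A trace of rank $2j-1$ spanning only a hyperplane of a $2j$-space $T$ must contribute exactly $-\tau(N|F(T))u_{R-2j}t^j$. I would isolate it using Lemma~\ref{lem:odd-top}: the middle coefficient of an odd-rank restriction is not determined by $Z$-symmetry, so it survives all the Gaussian cancellations. Its partner is the missing quotient direction $T/\langle F\rangle$ in $N/F$, which shifts the degree of the product $P_{N|F}Q_{N/F}$ by one, to $t^j$. I would verify the sign and the factor $u_{R-2j}=Q$ of the projective quotient $V/T$ by direct comparison in the smallest case $R=2j$.
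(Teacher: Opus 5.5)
Your route differs from the paper's, but as written it has a genuine gap exactly where the lemma has content.

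First, the $Z$-palindromicity step does not go through as in \eqref{eq:boseP}. Under $\delta\le1$ the contractions $(N|F)/G$ are generally not complete geometries. Every $U$ with $\langle G\rangle<U\le\langle F\rangle$, $\dim U=\rk G+1$ and $F(U)=G$ is a missing point of $(N|F)/G$, and such contractions have $P\ne1$; already $P_{B(r,1;q)}=1+t$ for $r\ge3$. So $[t^j]P_{N|F}$ is not ``empty counts plus deficient contributions'' until you have a recursive formula for $P$ of all matroids satisfying the hypothesis, which you never state. Your displayed $Z_{N|F}=\sum_U t^{\dim U}P_{(N|F)/F(U)}$ is also wrong as written: it sums over subspaces rather than flats. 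It therefore repeats a flat for every empty or deficient $U$, and it weights by $\dim U$ rather than rank.

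Second, the expression from \eqref{eq:inversion} is not linear in the indicator data. Each summand $P_{N|F}Q_{N/F}$ is a product of data below and above $\langle F\rangle$. The inductive formula for $Q_{N/F}$ also brings its own residuals $\tau((N|G)/F)$, which are invariants of contractions and do not appear in \eqref{eq:ambient-identity}. A proof must show that all these bilinear and residual terms cancel, leaving only $-\tau(N|F(T))u_{R-2j}t^j$. Saying they ``should cancel telescopically'' and checking $R=2j$ does not establish this.

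The paper avoids this bookkeeping with one structural identity, valid without any deficiency hypothesis. For $h_T=(-1)^{k(T)}t^{\delta(T)}Q_{N|F(T)}$ and $R_U=\sum_{T\le U}h_T$, one has $R_U(t)=t^{\dim U}R_U(t^{-1})$. This is proved by pushing the projective KL basis vector $\zeta^U$ through the composite deletion map. Its $\zeta^{\varnothing}$-coefficient is $x^{\dim U}R_U(x^{-2})$, and BFMN's Lemma~3.3 keeps every matrix entry in $\mathbb Z[x+x^{-1}]$.

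Only then does $\delta\le1$ enter, through $\deg h_T\le\lfloor\dim T/2\rfloor$:
\begin{itemize}
\item above the midpoint, only empty traces contribute, each as $t^{\dim T}$;
\item at an even midpoint, the only nonempty contribution is $T=U$ with $k(U)=\dim U-1$, which equals $-\tau(N|F(U))$ by Lemma~\ref{lem:odd-top};
\item symmetry then recovers the lower half.
\end{itemize}
Finally, ambient M\"obius inversion $Q_N=\sum_U(-1)^{\dim U}u_{R-\dim U}R_U$, together with counting the $g(R-b,a)$ superspaces of each empty $b$-space, produces $\mathcal K_{R,b}$.

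If you want a purely combinatorial proof built on \eqref{eq:inversion}, this symmetry (or an equivalent statement) is what your induction would have to establish. Your proposal does not supply it.
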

\begin{proof}
The first step is a symmetry identity that holds without a deficiency
bound. For every $U\le V$ define
\[
 h_T(t)=(-1)^{k(T)}t^{\delta(T)}Q_{N|F(T)}(t),\qquad
 R_U(t)=\sum_{T\le U}h_T(t),
\]
taking $Q$ of the empty trace to be one. We claim
\begin{equation}\label{eq:ambient-symmetry}
 R_U(t)=t^{\dim U}R_U(t^{-1}).
\end{equation}

Use the standard basis $\Gamma^F$ and the KL basis $\zeta^F$ of
\cite[Section 3, equation (4) and Lemma 3.2]{BFMN}. Their changes of basis
have coefficients $x^{\rk F-\rk G}P_{M|F/G}(x^{-2})$ and
$x^{\rk F-\rk G}(-1)^{\rk F-\rk G}Q_{M|F/G}(x^{-2})$, respectively.
Delete all points of $\PP(V)$ outside $E(N)$ in any order. Every global
deletion preserves rank, since $E(N)$ already spans. In the standard
basis the composite deletion map sends
\[
 \Gamma^T\longmapsto x^{k(T)-\dim T}\Gamma^{F(T)}.
\]
For $d=\dim U$, the projective KL basis vector indexed by $U$ is
$\sum_{T\le U}x^{d-\dim T}\Gamma^T$. After deletion and inverse change
of basis, its coefficient at $\zeta^{\varnothing}$ is
\[
 \sum_{T\le U}(-1)^{k(T)}x^{d+2k(T)-2\dim T}
          Q_{N|F(T)}(x^{-2})=x^dR_U(x^{-2}).
\]
By \cite[Lemma 3.3]{BFMN}, a single deletion sends a KL basis vector whose
support keeps rank to an integer combination of KL basis vectors; if its
support loses rank, it maps to $(x+x^{-1})$ times the corresponding vector.
A vector supported away from the deleted point maps identically.
Every matrix entry, and therefore every composite entry, lies in
$\mathbb Z[x+x^{-1}]$. The displayed coefficient is invariant under
$x\leftrightarrow x^{-1}$, proving \eqref{eq:ambient-symmetry}.
Local rank drops are included; no condition on intermediate trace deficiencies
is used.

Under the deficiency hypothesis, a nonempty trace satisfies
\[
 \deg h_T\le\lfloor(k(T)-1)/2\rfloor+\delta(T)
               \le\lfloor\dim T/2\rfloor.
\]
Hence coefficients of $R_U$ strictly above $d/2$ come only from empty
traces. If $d=2j$, a nonempty term reaches the midpoint only for $T=U$
with $k(U)=2j-1$. Its contribution is
$-[t^{j-1}]Q_{N|F(U)}=-\tau(N|F(U))$, by Lemma~\ref{lem:odd-top}.
Symmetry consequently determines
\begin{align*}
 R_U(t)={}&\sum_{\substack{T\le U:F(T)=\varnothing\\2\dim T\ge d}}
 \bigl(t^{\dim T}+t^{d-\dim T}-\ind_{2\dim T=d}t^{\dim T}\bigr)\\
 &-\ind_{d\text{ even},\ \delta(U)=1,\ k(U)>0}
                          \tau(N|F(U))t^{d/2}.
\end{align*}
Ambient Moebius inversion and $h_V=(-1)^RQ_N$ give
\[
 Q_N=\sum_{U\le V}(-1)^{\dim U}u_{R-\dim U}R_U.
\]
For a fixed empty $b$-space $T$, contributing superspaces have dimensions
$b+a$, with $0\le a\le\min(b,R-b)$, and number $g(R-b,a)$.
Their contribution is \eqref{eq:ambient-kernel}. A deficient midpoint has
even ambient dimension, so its minus sign is unchanged. This proves
\eqref{eq:ambient-identity}.
\end{proof}

The algebraic basis changes and deletion maps are established inputs.
The application of this identity will require a separate classification of
traces. Two computations using the identity share that classification;
agreement between them alone would not prove the geometric hypotheses.

\section{Composition of independent projective charts}
\label{sec:composition}

Chart deletions with independent external directions add when $q>2$.
Over $\F_2$ there is a correction for each pair of charts. The vertices
may intersect, and the origins inside the initially deleted flat may
be chosen independently.

Let
\begin{equation}\label{eq:chart-space}
 V=W\oplus X_0\oplus\cdots\oplus X_h,
 \qquad \dim V=R,\quad \dim W=K,\quad \dim X_i=m_i>0
 \quad(1\le i\le h),
\end{equation}
where \(h\ge1\) and \(X_0\) is allowed to be zero.
Choose \(L_i\le W\) and \(w_i\in W\setminus L_i\), and put
\(\ell_i=\dim L_i\). Define
\[
 C_i=\PP(\langle w_i\rangle+L_i+X_i)\setminus\PP(L_i+X_i).
\]
The points of \(C_i\) have unique representatives \(w_i+l+x\), with
\(l\in L_i\) and \(x\in X_i\). In particular, \(|C_i|=q^{\ell_i+m_i}\)
and \(|C_i\cap\PP(W)|=q^{\ell_i}\). Write
\[
 B=\PP(V)\setminus\PP(W),\qquad
 N_i=B\setminus C_i,\qquad N=B\setminus\bigcup_{i=1}^h C_i
\]
for the corresponding restriction matroids. Throughout this section
we assume that \(N\) spans \(V\). Then \(B\) and each \(N_i\) also span.

\begin{theorem}[Independent-chart composition]\label{thm:composition}
In the construction above, the subspaces \(L_i\) may have arbitrary
intersections. If \(q>2\), then
\begin{equation}\label{eq:nonbinary-composition}
 Q_N(t)=\sum_{i=1}^h Q_{N_i}(t)-(h-1)Q_B(t).
\end{equation}
If \(q=2\), put \(c_{ij}=\dim(L_i\cap L_j)\) and
\(A_b=\AG(b,2)\), of matroid rank \(b+1\). Then
\begin{align}
 Q_N(t)={}&\sum_{i=1}^h Q_{N_i}(t)-(h-1)Q_B(t)\notag\\
 &-\sum_{i<j}(2^{m_i}-1)(2^{m_j}-1)
   \sum_{\substack{0\le b\le c_{ij}\\ b\ {\rm even}}}
   \gbtwo{c_{ij}}{b}2^{\ell_i+\ell_j-2b}
   \tau(A_b)u_{R-b-2}t^{(b+2)/2}.
 \label{eq:binary-composition}
\end{align}
Here \(u_a=2^{\binom a2}\) in the binary formula and \(\tau(A_0)=1\).
There are no interactions involving three or more charts.
The contribution of the pair \(i,j\) is supported in degrees at most
\(1+\lfloor c_{ij}/2\rfloor\).
\end{theorem}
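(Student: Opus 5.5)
The plan is to apply Lemma~\ref{lem:ambient-identity} to each of $B$, the $N_i$, and $N$, and compare the two sides term by term. The identity writes $Q$ as a sum over ambient subspaces $T\le V$ of quantities determined by the trace of $T$: empty traces carry the kernel $\mathcal K_{R,b}$, and traces of rank $\dim T-1$ in even dimension carry $\tau$-terms. The whole proof therefore reduces to a classification of ambient traces. The first task is to verify the hypothesis $\delta(T)\le1$ for every nonempty trace of $N$, and hence of $B$ and of each $N_i$.

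\textbf{Step 1: trace classification.} I will adapt the argument of Lemma~\ref{lem:pointed-support} and Lemma~\ref{lem:bose-flats}. Take $T\le V$ with $T\not\subseteq W$, fix a surviving $v\in T$, and ask which cosets of $T$-directions are wholly missing. The missing set is $\PP(W)\cup\bigcup_i C_i$, and the $C_i$ live in the independent blocks $\langle w_i\rangle+L_i+X_i$. Projecting to the $X_i$ coordinates then shows:
\begin{itemize}
\item[(i)] the nonspanning part of a trace comes from at most two charts;
\item[(ii)] the deficiency is at most one;
\item[(iii)] if the deficiency is exactly one, either the missing part lies in a single chart together with $W$, or (only when $q=2$) a mixed configuration occurs that uses two charts $i,j$.
\end{itemize}
The reason for (iii) is that over $\F_q$ with $q>2$, a line meeting $C_i$ and $C_j$ in one point each still has $q-1\ge2$ further points. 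Over $\F_2$ the third point $w_i+w_j+\dots$ may lie in $W$, which is already missing. Empty traces are exactly the subspaces all of whose points are missing. Because the blocks are independent, such a subspace lies inside $W+$ a single block, giving inclusion--exclusion weights $\sum_i E_b(N_i)-(h-1)E_b(B)$.

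\textbf{Step 2: additivity.} Apply the identity to $N$ and to each $N_i$ and $B$. The empty-trace counts combine linearly as above. Deficient traces involving only chart $i$ have the same restriction $N|F(T)$ in $N$ and in $N_i$, since the restriction sees only that chart, so their $\tau$-terms also combine as $\sum_i-(h-1)\cdot(\text{B-terms})$. For $q>2$ no other traces exist, which gives \eqref{eq:nonbinary-composition}.

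\textbf{Step 3: binary pair terms, the main obstacle.} For $q=2$ I must enumerate the mixed deficient traces for a pair $i<j$, compute the restriction matroid, and compute its $\tau$. I expect such a $T$ to have dimension $b+2$. It should be spanned by a $b$-dimensional subspace $D\le L_i\cap L_j$ (translates of which lie in both charts), plus one point $w_i+l_i+x_i$ with $x_i\neq0$ and one point $w_j+l_j+x_j$ with $x_j\neq0$. The surviving trace should then be the affine geometry $A_b$ sitting off the missing hyperplane, with the deficiency coming from the missing coset. The count of such $T$ should be
\[
(2^{m_i}-1)(2^{m_j}-1)\,\gbtwo{c_{ij}}{b}\,2^{\ell_i+\ell_j-2b}.
\]
Here the factor $(2^{m_i}-1)(2^{m_j}-1)$ counts the external directions, $\gbtwo{c_{ij}}{b}$ counts the choice of $D$, and $2^{\ell_i+\ell_j-2b}$ counts the translates $l_i,l_j$ modulo $D$. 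Each such $T$ contributes $-\tau(A_b)u_{R-b-2}t^{(b+2)/2}$, and only even $b$ contribute because the ambient dimension must be even. I anticipate the hard part is confirming that the trace really is $A_b$ and that no other mixed shapes appear. I would check this by computing the span of the missing set inside $T$ in coordinates adapted to $W\oplus X_i\oplus X_j$. The degree bound $1+\lfloor c_{ij}/2\rfloor$ is then immediate from $b\le c_{ij}$. The absence of triple interactions follows from (i), since a deficiency-one trace cannot involve three independent external blocks.
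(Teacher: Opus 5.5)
Your architecture matches the paper's. You apply Lemma~\ref{lem:ambient-identity} to $N$, the $N_i$ and $B$, let one-chart traces cancel against $h-1$ copies of the base, and add binary pair traces. Your count $(2^{m_i}-1)(2^{m_j}-1)\gbtwo{c_{ij}}{b}2^{\ell_i+\ell_j-2b}$ and per-trace contribution $-\tau(A_b)u_{R-b-2}t^{(b+2)/2}$ agree exactly with the paper. The gap is that Step~1, which carries essentially all the content, is asserted rather than proved. Worse, the one reason you give for the binary exception is wrong. For $x_i,x_j\ne0$, the third point of the line through $\langle w_i+l_i+x_i\rangle$ and $\langle w_j+l_j+x_j\rangle$ has external component $x_i+x_j$, which lies in no single $X_k$. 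So it is never in $W$, and in fact it always survives. The binary phenomenon is the reverse of what you describe: this line has exactly one surviving point, so its trace has rank one in a $2$-space (the $b=0$ pair term). For $q>2$, its $q-1\ge2$ surviving points span the line. Counting points on lines also does not control higher-dimensional traces, rule out triple interactions, or give the exhaustiveness you defer in Step~3.

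The missing idea is the projection argument of Lemma~\ref{lem:mixed-projection}. Let $P$ be the image of $T$ in $X_0\oplus\cdots\oplus X_h$, put $H=T\cap W$, and call a nonzero vector of $P$ mixed if it lies in no $X_i$ with $i\ge1$. Every $H$-fiber over a mixed vector survives entirely, and full surviving fibers over a spanning set of $P$ span $T$. If $P$ has a mixed vector, the mixed vectors span $P$, unless $q=2$ and $P=\langle x_i,x_j\rangle$. Otherwise a functional $f$ vanishing on them has $f^{-1}(1)$ covered by the axes and meeting two of them at $u,v$. Then $au+(1-a)v$ with $a\ne0,1$ (when $q>2$), or $u+v+z$ for any third $z\in f^{-1}(1)$ (when $q=2$), is a mixed vector with $f$-value one, a contradiction. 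A $P$ with no mixed vectors is zero or lies in one axis. This gives three facts: apart from the exceptional planes, all empty and deficient traces lie in a single $W+X_i$; no three charts interact; and over an exceptional plane the trace is deficient exactly when both pure fibers are wholly missing. The last fact yields your count and excludes other mixed shapes.

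You also need $\delta\le1$ for one-chart traces; without it Lemma~\ref{lem:ambient-identity} does not apply to $N_i$ or $N$ at all. Quotient by $L_i$, with $W'=W/L_i$ and $w'$ the image of $w_i$; the missing vectors then lie in $W'\cup(\langle w'\rangle\oplus X_i)$.
\begin{itemize}
\item For $q>2$, suppose a $d$-space lies in neither piece and is not spanned by its vectors outside them. Then one of its affine hyperplanes, with $q^{d-1}$ points, is covered by two proper affine subspaces of at most $q^{d-2}$ points each, which is impossible.
\item For $q=2$, a nonempty trace inside a hyperplane $J$ of $A$ leaves the coset $A\setminus J$ wholly missing. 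Unless $J\le X_i$, that coset splits into two disjoint parallel affine hyperplanes whose direction lies in $W'\cap X_i=0$, forcing $d=2$.
\end{itemize}
Lifting back over $L_i$ raises ambient and surviving ranks by the same amount, so deficiency is preserved.
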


The distinction in the theorem is between \(q=2\) and \(q>2\), rather
than between even and odd characteristic. In particular,
\eqref{eq:nonbinary-composition} holds over \(\F_4\).
The spanning assumption does not imply connectedness for every chart
configuration; we prove a stronger connectivity statement for the
construction used in Section~\ref{sec:patterns}.

\subsection{Projection and deficient traces}

For \(T\le V\), let \(H=T\cap W\), and let \(P\) be the image of \(T\)
in \(X_0\oplus\cdots\oplus X_h\). Call a nonzero vector of \(P\)
\emph{mixed} if it belongs to none of \(X_1,\ldots,X_h\).
This includes every vector with a nonzero \(X_0\)-component.
Every \(H\)-fiber over a mixed vector survives: deleted points outside
\(W\) project into a single \(X_i\).

\begin{lemma}\label{lem:mixed-projection}
If \(q>2\) and \(P\) contains a mixed vector, its mixed vectors span \(P\).
For \(q=2\), the only exception is
\[
 P=\langle x_i,x_j\rangle,\qquad
 0\ne x_i\in X_i,\quad 0\ne x_j\in X_j,\quad i\ne j.
\]
This plane has exactly one mixed nonzero vector, namely \(x_i+x_j\).
\end{lemma}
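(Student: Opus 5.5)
Let $D=X_1\cup\cdots\cup X_h$ denote the set of non-mixed vectors of $X_0\oplus\cdots\oplus X_h$. Our plan is to fix a mixed vector $v\in P$ and show that every $y\in P$ is a sum of mixed vectors of $P$. If $y$ is mixed or zero, there is nothing to do. So assume $y\in X_i\setminus\{0\}$ for some $i\ge1$. We look at the vectors $v+\lambda y$ with $\lambda\in\F_q^\times$. Each of these lies in $P$, and together with $v$ it recovers $y=\lambda^{-1}((v+\lambda y)-v)$. It therefore suffices to find one $\lambda$ for which $v+\lambda y$ is mixed.

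Now $v+\lambda y$ differs from $v$ only in its $X_i$-component, so all other components of $v$ are unchanged. Suppose $v$ has nonzero components in two summands other than $X_i$, or a nonzero $X_0$-component. Then $v+\lambda y$ stays mixed for every $\lambda$, and we are done. Since $v$ is mixed, the remaining case is that $v=x_i+x_j$ with $x_i\in X_i$, $0\ne x_j\in X_j$, $j\ne i$, $j\ge1$, and $x_i\ne0$. In this case $v+\lambda y$ fails to be mixed exactly when $x_i+\lambda y=0$, and this happens for at most one $\lambda$. When $q>2$ there are at least two values of $\lambda$, so some choice works. This proves the first claim.

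For $q=2$, the same argument fails only when $\lambda=1$ is forced and $y=x_i$. Rerun this with every mixed $v\in P$. If some mixed $v$ has at least three nonzero components (counting $X_0$), then every $y\in P\cap D$ is reached as above. Hence $P$ is spanned by mixed vectors, because $P$ is spanned by its mixed vectors together with $P\cap D$. So suppose every mixed vector of $P$ has the form $x_i+x_j$ with exactly two nonzero components. If two of them have different pairs or different components, their sum or a suitable combination gives a further route, and we expect to show again that they span. The main obstacle is this bookkeeping. We would handle it by showing that if the mixed vectors do not span, they form a proper subspace together with zero. Over $\F_2$, the complement of a proper subspace $S$ of $P$ is the coset structure $P\setminus S$, and if nonempty it must contain mixed vectors unless $P\setminus S\subseteq D$.

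Concretely, let $S$ be the span of the mixed vectors, and assume $S\ne P$. Then every vector in $P\setminus S$ lies in some single $X_i$. If $y,y'\in P\setminus S$ lie in the same coset of $S$, then $y+y'\in S$. The set $P\setminus S$ is also closed under adding elements of $S$. Adding the mixed vector $v=x_i+x_j$ to $y\in X_k\cap(P\setminus S)$ must again land in a single summand. This forces $k\in\{i,j\}$ and $y=x_k$, so $P\setminus S\subseteq\{x_i,x_j\}+S$. Counting then gives $|S|=2$, $S=\{0,v\}$, and $P=\langle x_i,x_j\rangle$. Conversely, in this plane the nonzero vectors are $x_i$, $x_j$ and $x_i+x_j$, and only the last is mixed. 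This gives the stated exception and its unique mixed vector.
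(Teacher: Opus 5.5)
Your proof is correct, and it takes a different route from the paper's. The paper argues by contradiction. It takes a nonzero functional $f$ on $P$ vanishing on the span of the mixed vectors. The affine hyperplane $f^{-1}(1)$ spans $P$ and is covered by the pure axes, so it meets two distinct axes in vectors $u,v$. For $q>2$, the vector $au+(1-a)v$ with $a\notin\{0,1\}$ is mixed with $f$-value one. For $q=2$, any third point $z$ of $f^{-1}(1)$ makes $u+v+z$ mixed, which forces $f^{-1}(1)=\{u,v\}$.

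You instead fix one mixed $v$ and perturb it along each pure $y\in P\cap X_i$. The vector $v+\lambda y$ fails to be mixed only when $v=x_i+x_j$ and $\lambda y=-x_i$, so at most one $\lambda\in\F_q^\times$ is bad, and $y=\lambda^{-1}((v+\lambda y)-v)$. This is constructive, avoids duality, and shows that every pure vector of $P$ is a scalar multiple of a difference of two mixed vectors. The paper's route has a different advantage: one covering mechanism treats $q>2$ and $q=2$ together. It is also the same affine-hyperplane covering argument the paper reuses in the individual-chart classification.

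For $q=2$, your final paragraph is a complete proof. It translates a pure $y\in P\setminus S$ by a mixed vector and uses that $P\setminus S$ is a union of $S$-cosets. The speculative paragraph about bookkeeping before it should be deleted. Two small repairs are needed:
\begin{itemize}
\item Your reduction to two-component mixed vectors overlooks a mixed $v$ with exactly two nonzero components, one of them in $X_0$. This is harmless: then $y+v$ is mixed, which is exactly the contradiction you want.
\item You should record $P\setminus S\subseteq\{x_i,x_j\}$, which is what your conclusion $y=x_k$ gives, rather than the weaker $\{x_i,x_j\}+S$. The count $|S|=2$ needs the former.
\end{itemize}
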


\begin{proof}
Suppose that the mixed vectors exist but do not span \(P\). Choose a
nonzero functional \(f:P\to\F_q\) that annihilates their span.
The affine hyperplane \(f^{-1}(1)\) is covered by the pure axes \(X_i\).
It spans \(P\), since its differences span \(\ker f\). It therefore
meets at least two axes: if it were contained in one axis, so would \(P\),
contrary to the existence of a mixed vector. Choose
\(u\in X_i\cap f^{-1}(1)\) and \(v\in X_j\cap f^{-1}(1)\), with \(i\ne j\).
For \(q>2\), a scalar \(a\notin\{0,1\}\) makes \(au+(1-a)v\) a mixed
vector in \(f^{-1}(1)\), a contradiction.

For \(q=2\), any third vector \(z\in f^{-1}(1)\) makes \(u+v+z\)
mixed and gives it \(f\)-value one. Indeed, if \(z\) is in a third axis,
three components survive; if it is in one of the first two axes, its
distinctness from \(u,v\) leaves two nonzero components. Thus
\(f^{-1}(1)=\{u,v\}\), so \(\dim P=2\) and the stated description follows.
\end{proof}

Full surviving fibers over vectors spanning \(P\) span \(T\): differences
within one fiber generate \(H\), and projection supplies all of \(P\).
If there are no mixed vectors, \(P\) is zero or lies in one \(X_i\);
otherwise the sum of nonzero vectors in two different axes would be
mixed. Consequently, apart from the binary exceptional planes, every
empty or nonempty deficient trace is contained in \(W\) or in one
\(W+X_i\), and the latter sees only the \(i\)-th chart.

\subsection{The individual-chart classification}

Fix a chart and abbreviate its parameters by \(L,\ell,w,X,m\).
Quotient by \(L\), and write \(W'=W/L\), \(k=\dim W'=K-\ell\),
and \(w'\ne0\) for the image of \(w\). In \(W'\oplus X\) the missing
points are \(\PP(W')\) and the chart represented by \(w'+x\), \(x\in X\).
Their vector supports lie in
\[
 W'\ \cup\ U',\qquad U'=\langle w'\rangle\oplus X.
\]
Inside \(U'\) the surviving points are precisely \(\PP(X)\).
Any subspace outside \(W'\oplus X\) is spanned by its vectors outside
that support, so it cannot have a nonempty deficient trace.

For \(q>2\), a subspace \(A\) contained in neither \(W'\) nor \(U'\)
is spanned by its vectors outside \(W'\cup U'\).
Otherwise an affine hyperplane in \(A\), obtained as \(f^{-1}(1)\)
for a suitable nonzero functional, would be covered by its intersections
with \(W'\) and \(U'\). Neither intersection is the entire affine
hyperplane, since that hyperplane spans \(A\). For \(\dim A=d\ge2\),
each intersection has at most \(q^{d-2}\) points, whereas the affine
hyperplane has \(q^{d-1}\) points. This is impossible when \(q>2\).
The one-dimensional case is immediate. It follows that the nonempty
deficient quotient traces are exactly
\[
 A\le U',\qquad A\not\le X,\qquad d=\dim A\ge2.
\]
The surviving restriction is the full projective geometry on
\(A\cap X\), of rank \(d-1\).

The binary case has one additional possibility.
If a nonempty trace in \(A\) is contained in a hyperplane \(J\), then
\(A\setminus J\) is wholly missing. Its intersections with \(W'\) and
with \(w'+X\) cover it. It cannot lie wholly in \(W'\), since
\(A\setminus J\) spans \(A\). If it lies wholly in \(w'+X\), then
\(J\le X\) and the trace is \(J\setminus\{0\}\), of rank \(d-1\).
In the remaining case two proper affine subspaces cover the affine
\((d-1)\)-space \(A\setminus J\). Each must be a hyperplane, and they
must be disjoint. They have the same direction, of dimension \(d-2\),
contained in \(W'\cap X=0\). Hence \(d=2\).
A nonempty deficient binary plane has two missing points and exactly
one surviving point, again of deficiency one. This proves unit
deficiency in the binary case as well as the nonbinary case.

The only empty quotient subspaces outside \(W'\) are the points
\(\langle w'+x\rangle\), \(x\ne0\), of which there are \(q^m-1\).
Indeed, an empty subspace \(A\not\le W'\) has every outside point in
the chart. Its intersection with \(W'\) must be zero: a nonzero vector
in that intersection either changes the \(W'\)-label of a chart point,
or is a multiple of \(w'\), in which case subtraction produces a
surviving point of \(X\). If \(\dim A\ge2\), two independent points
outside \(W'\), normalized to have \(w'\)-coefficient one, have a
nonzero difference in \(X\), again a survivor. Thus \(\dim A=1\).

Let \(C_d^{(q)}\) count nonempty deficient quotient \(d\)-spaces.
Choosing \(A\cap X\) and then the coset of \(w'\) gives
\begin{equation}\label{eq:single-deficiency-count}
 C_d^{(q)}=\gb{m}{d-1}q^{m-d+1},
 \qquad 2\le d\le m+1,\quad q>2.
\end{equation}
The same formula holds for \(q=2,d\ge3\). For binary planes, count
their unique missing pair. A point in \(W'\setminus\{0\}\) and a chart
point outside \(W'\) give \((2^k-1)(2^m-1)\) pairs. Two distinct chart
points outside \(W'\) give \(\binom{2^m-1}{2}\) further pairs. The third
point survives in each case, and the two classes are disjoint. Therefore
\begin{equation}\label{eq:binary-single-deficiency}
 C_2^{(2)}=(2^m-1)(2^{m-1}+2^k-2).
\end{equation}
All unlisted counts are zero. In particular the \(k\)-dependent term
in \eqref{eq:binary-single-deficiency} is specific to \(\F_2\).

Returning from the quotient by \(L\), fix a projected \(d\)-space
and let its lifted subspace meet \(L\) in dimension \(v\).
Choose that intersection and then a linear graph into its quotient.
The lift multiplicity is
\begin{equation}\label{eq:vertex-lift}
 \gb{\ell}{v}q^{(\ell-v)d}.
\end{equation}
For a nonempty trace, all fibers over surviving quotient points are
present. Their differences generate the vertex intersection, so both
the ambient and surviving ranks increase by \(v\).
Deficiency is preserved. Above a deficient quotient \(d\)-space the
surviving restriction is a projective geometry of vector rank \(d+v-1\)
with a \(v\)-subspace deleted. If \(d+v=2s\), the Bose formula
\eqref{eq:boseP} gives
\begin{equation}\label{eq:single-bose-tau}
 \tau=\gb{v}{s-1}-\gb{v}{s}.
\end{equation}
This also follows directly by comparing opposite coefficients in
\[
 Z(t)=P(t)+\sum_{a=1}^{d+v-1}
       \left(\gb{d+v-1}{a}-\gb{v}{a}\right)t^a.
\]
Every nonempty contraction in this Bose restriction is projective;
the empty flat supplies \(P(t)\) separately.
If \eqref{eq:single-bose-tau} is nonzero, then \(v\ge s-1\), and hence
\(d\le s+1\). Its nonnegativity also follows from Gaussian symmetry
and the adjacent Gaussian ratios, since \(v\le2s-2\).

Lifting the additional empty quotient points gives the extra empty
\(b\)-space count
\begin{equation}\label{eq:single-empty-count}
 E_{i,b}=(q^{m_i}-1)\gb{\ell_i}{b-1}q^{\ell_i-b+1},
 \qquad 1\le b\le\ell_i+1.
\end{equation}
The factor \(q^{m_i}-1\) is not divided by \(q-1\): the coefficient
of \(w_i\) has already been normalized to one.

For the single-chart expression, use \(C_{i,d}^{(q)}\)
from \eqref{eq:single-deficiency-count} and
\eqref{eq:binary-single-deficiency}, with \(k=K-\ell_i\).
Lemma~\ref{lem:ambient-identity}, with the kernel of
\eqref{eq:ambient-kernel}, gives
\begin{align}
 Q_{N_i}-Q_B={}&
 \sum_{b=1}^{\ell_i+1}E_{i,b}\mathcal K_{R,b}\notag\\
 &-\sum_{\substack{2\le d\le m_i+1,\ 0\le v\le\ell_i\\d+v=2s}}
 C_{i,d}^{(q)}\gb{\ell_i}{v}q^{(\ell_i-v)d}
 \left(\gb{v}{s-1}-\gb{v}{s}\right)u_{R-2s}t^s.
 \label{eq:single-chart}
\end{align}
Every kernel and Möbius factor uses the common ambient rank \(R\).
The right side is supported through degree \(\ell_i+1\).
For the empty terms this follows from their kernel degrees; for the
deficient terms it follows from \(s-1\le v\le\ell_i\).
This is also consistent with Theorem~\ref{thm:locality}.

\subsection{Mixed binary traces and the composition law}

Consider an exceptional binary projected plane
\(P=\langle x_i,x_j\rangle\) from Lemma~\ref{lem:mixed-projection},
and write \(H=T\cap W\), \(\dim H=b\).
The full mixed fiber survives and spans rank \(b+1\).
Any surviving point in either pure fiber raises the rank to \(b+2\),
so a nonempty deficient trace occurs exactly when both pure fibers
are wholly missing. This requires
\[
 H\le L_i\cap L_j,
\]
and their graph values modulo \(H\) must belong to \(w_i+L_i\) and
\(w_j+L_j\), respectively. There are
\begin{equation}\label{eq:binary-mixed-count}
 (2^{m_i}-1)(2^{m_j}-1)
 \gbtwo{c_{ij}}{b}2^{\ell_i+\ell_j-2b}
\end{equation}
such subspaces: choose the nonzero external projections, then \(H\),
then the two graph values. These choices are recovered uniquely from
\(T\). Different chart pairs give different projected planes.
The surviving trace is exactly \(\AG(b,2)\).

Its top coefficient has an explicit formula. The rank-\(a\) flats of
\(\AG(2s,2)\) number
\(\gbtwo{2s}{a-1}2^{2s-a+1}\), and every nonempty contraction
simplifies to projective geometry. Comparing the coefficients at
degrees \(s\) and \(s+1\) of its palindromic \(Z\)-polynomial gives
\begin{equation}\label{eq:affine-tau}
 \tau(\AG(2s,2))
   =2^s\left(\gbtwo{2s}{s}
               -2\gbtwo{2s}{s-1}\right)>0.
\end{equation}
For \(s\ge1\), positivity follows because the ratio of the two Gaussian
coefficients is \((2^{s+1}-1)/(2^s-1)>2\).
For \(s=0\) the formula gives one.

\begin{proof}[Proof of Theorem~\ref{thm:composition}]
The trace classification proves that every nonempty ambient trace has
deficiency at most one. Empty traces consist of the base empty spaces
in \(W\) and the additional empty spaces associated with a single chart.
A mixed fiber prevents any mixed trace from being empty.
Nonempty one-axis deficiencies lie in a unique \(W+X_i\), and their
surviving restriction is unchanged by every other chart.
They cannot belong to two such supports, whose intersection is \(W\).

For \(q>2\) this exhausts the empty and deficient traces. Apply
Lemma~\ref{lem:ambient-identity} to \(N,N_1,\ldots,N_h,B\).
Each nonbase contribution occurs once, whereas the base contribution
occurs \(h\) times in the sum of the individual polynomials.
Subtracting \(h-1\) copies gives \eqref{eq:nonbinary-composition}.

For \(q=2\), the only additional traces are those counted in
\eqref{eq:binary-mixed-count}. Their ambient dimension is \(b+2\) and
their surviving rank is \(b+1\). The ambient identity includes them
only when \(b\) is even, with contribution
\(-\tau(A_b)u_{R-b-2}t^{(b+2)/2}\) per trace.
This gives \eqref{eq:binary-composition}. The projection lemma excludes
all higher interactions, and \(b\le c_{ij}\) gives the support bound.
\end{proof}

\begin{remark}\label{rem:linear-binary-interaction}
The binary correction is linear if and only if every \(c_{ij}\le1\).
Indeed, if \(c_{ij}\ge2\), its degree-two contribution has magnitude
\[
 2(2^{m_i}-1)(2^{m_j}-1)\gbtwo{c_{ij}}{2}
      2^{\ell_i+\ell_j-4}u_{R-4}>0,
\]
because \(\tau(\AG(2,2))=2\). All such contributions have the same
negative sign and cannot cancel. Independent external directions
are a separate hypothesis and are retained throughout.
\end{remark}

\section{Exact finite Tur\'an sign patterns}
\label{sec:patterns}

Each chart changes a bounded range of coefficients. We use nested
vertices to choose these ranges, and strictly convex decreasing weights
for the external dimensions. Convexity controls consecutive selected
indices. At unselected indices, the leading coefficients determine the
sign.

\begin{theorem}\label{thm:patterns}
Let \(q\) be a prime power, let
\(\varnothing\ne S=\{s_1<\cdots<s_h\}\subset\{2,3,\ldots\}\) be finite,
and let \(K\ge\max S+1\). Put
\begin{equation}\label{eq:pattern-parameters}
 \ell_a=s_a-2,\qquad
 \lambda_a=\binom{h-a+2}{2}\quad(1\le a\le h),\qquad
 D=1+\sum_{a=1}^h\lambda_a=1+\binom{h+2}{3}.
\end{equation}
For every integer \(n\ge20K^2\), the explicit matroid
\(M=M(q,S,K,n)\) constructed below is simple, \(3\)-connected and
\(\F_q\)-representable, of rank \(R=K+Dn\). Its ordinary inverse
Kazhdan--Lusztig polynomial has degree exactly \(K\), and, writing
\(Q_M(t)=\sum_{j=0}^K c_jt^j\), one has
\[
 c_0>c_1>\cdots>c_K>0.
\]
For every \(1\le j<K\),
\[
 \Delta_j(M)<0\quad\Longleftrightarrow\quad j\in S,
 \qquad
 \Delta_j(M)>0\quad\text{if }j\notin S.
\]
Moreover, for all \(j\in S\) simultaneously,
\begin{equation}\label{eq:simultaneous-ratio-limit}
 \frac{c_j^2}{c_{j-1}c_{j+1}}\longrightarrow0
 \qquad(n\longrightarrow\infty).
\end{equation}
In particular, \(S\) is realized in rank
\begin{equation}\label{eq:pattern-rank-bound}
 K+20K^2\left(1+\binom{h+2}{3}\right).
\end{equation}
\end{theorem}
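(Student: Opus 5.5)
The plan is to realize the matroid as a chart deletion in the setting of Theorem~\ref{thm:composition}, with every chart parameter fixed by \eqref{eq:pattern-parameters}. Take \(V=W\oplus X_0\oplus X_1\oplus\cdots\oplus X_h\) with \(\dim W=K\), \(\dim X_0=n\) and \(m_a=\dim X_a=\lambda_a n\), so that \(R=K+Dn\). Choose a nested flag \(L_1\le L_2\le\cdots\le L_h\le W\) with \(\dim L_a=\ell_a=s_a-2\), which is possible since \(\ell_h\le K-3\), and choose \(w_a\in W\setminus L_a\). Put \(M=N=B\setminus\bigcup_aC_a\).

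\emph{Structural properties.} Simplicity and \(\F_q\)-representability are immediate. Spanning follows by exhibiting a surviving basis, as in Proposition~\ref{prop:explicit}, using \(X_0\ne0\) and mixed vectors. For \(3\)-connectivity I would use the large surviving set outside \(W+X_a\). Every element lies on many surviving lines that meet two different axes, so a \(2\)-separation would force one side to span an entire coordinate block; the mixed fibers then rule this out.

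\emph{Reducing \(Q_M\) to explicit sums.} By Theorem~\ref{thm:composition},
\[
Q_M=Q_B+\sum_a\bigl(Q_{N_a}-Q_B\bigr)-(\text{binary pair terms}).
\]
Here \(Q_B=Q_{B(R,K;q)}\) is given by Proposition~\ref{prop:boseQ}, each single-chart change is given by \eqref{eq:single-chart}, and the pair term for charts \(a<b\) has \(c_{ab}=\ell_a\), since the vertices are nested. By Theorem~\ref{thm:locality}, or directly from the support statements, chart \(a\) alters only degrees \(\le\ell_a+1=s_a-1\). Every coefficient is now an explicit finite signed sum of terms of the form (polynomial in \(q^n\)) times \(u_{R-\cdot}\). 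I would normalize by \(u_{R}\) and write each \(c_j\) as a Laurent-type expansion in \(q^{-n}\), of the form \(c_j\approx A_jq^{e_j(n)}\), where \(e_j\) is quadratic in \(n\) and comes from \(u_{R-j}\) together with the chart factors \(q^{m_a}\).

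\emph{Leading exponents.} For the base, \(c_j^{B}\) has leading size about \(u_{R-2j-1}\)-type terms times \(g(R,\cdot)\) factors. Chart \(a\) adds to \(c_{s_a-1}\) an empty-trace term of size about \((q^{m_a}-1)\,u_{R-s_a+1}\), which exceeds the base contribution there by a factor of roughly \(q^{\lambda_a n}\). The choice \(\lambda_a=\binom{h-a+2}{2}\) should make \(\lambda\) strictly convex and decreasing in \(a\). This allows two things: the boost at \(s_a-1\) makes \(c_{s_a-1}c_{s_a+1}\) exceed \(c_{s_a}^2\) by an exponential factor in \(n\), which gives \(\Delta_{s_a}<0\) and \eqref{eq:simultaneous-ratio-limit}; and even when \(s_{a+1}=s_a+1\), the boost at \(s_{a+1}-1=s_a\) stays smaller than the boost needed on the other side, because \(2\lambda_{a}<\lambda_{a-1}+\lambda_{a+1}\) holds in the relevant configuration. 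At unselected \(j\), the exponents satisfy \(2e_j=e_{j-1}+e_{j+1}\) (the quadratic part \(u\) is log-concave with equality only in the linear correction), and I would compare leading coefficients, showing \(A_j^2>A_{j-1}A_{j+1}\), for example via Gaussian ratio bounds as in \eqref{eq:affine-tau}. The binary pair terms have sizes \(q^{m_a+m_b}u_{R-b-2}\) but lie in degrees \(\le1+\lfloor\ell_a/2\rfloor\). I expect them to be lower order than the chart boosts at the same degree, and this must be checked explicitly. Strict decrease \(c_0>\cdots>c_K\) and \(c_K>0\), with actual degree \(K\), come from the same leading terms: the top coefficient is untouched by the charts since \(s_h-1<K\), and its positivity follows from \eqref{eq:boseQ}.

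\emph{Main obstacle.} The hardest step is turning these asymptotic comparisons into inequalities valid for every \(n\ge20K^2\), uniformly in \(q\ge2\) and \(S\). I would first bound the number of terms (at most polynomially many in \(K\)) and each subleading term by \(q^{-n+O(K^2)}\) relative to the leader; the constant \(20K^2\) should then absorb the Gaussian-coefficient and term-count losses.
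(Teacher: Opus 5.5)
Your architecture is the same as the paper's: Theorem~\ref{thm:composition} with nested vertices (so $c_{ab}=\ell_a$), decreasing strictly convex triangular weights, exponent comparison at selected indices, leading-constant comparison elsewhere, and an effective threshold. The gap is in the leading-term bookkeeping, which is the actual content of the proof.

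Put $x=q^n$ and $Z=q^R=q^Kx^D$. Chart $a$'s empty-trace term $E_{a,j}\mathcal K_{R,j}$ boosts \emph{every} degree $1\le j\le s_a-1$ by $x^{\lambda_a}$, not only $j=s_a-1$. Hence $c_j/u_R\sim H_jx^{\lambda(j)-Dj}$, with $\lambda(j)=\max\{\lambda_a:s_a>j\}$, and $\lambda(j)=0$ when no chart is active. So your claim that $2e_j=e_{j-1}+e_{j+1}$ at every unselected $j$ is false. The exponent gap is $2\lambda_1-\lambda(2)>0$ at $j=1$, and $\lambda_a-\lambda_{a+1}>0$ at an unselected $j=s_a-1\ge2$. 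Only at the remaining unselected indices are the exponents balanced, and there one needs the explicit ratios \eqref{eq:constant-weight-ratios}. Likewise, at a consecutive pair $s_{a+1}=s_a+1$ the weights at index $s_a$ are $\lambda_a,\lambda_{a+1},\lambda_{a+2}$, with $\lambda_{h+1}=\lambda_{h+2}=0$.

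More seriously, you never consider the individual deficient terms of \eqref{eq:single-chart}. They carry up to $x^{(j+1)\lambda_a}$, far more than the boost $x^{\lambda_a}$. They are subdominant only because they come with $Z^{-2j}$ rather than $Z^{-j}$, giving relative exponent at most $j(\lambda_a-D)<0$. The pair terms you defer have relative exponent at most $\min(\lambda_a,\lambda_b)-Dj<0$. So $D>\lambda_a$ is essential, and $\lambda_1<D$ is also what gives $c_0>c_1$. Your proposal uses $X_0$ only for spanning and never identifies these inequalities.

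Your treatment of the base would also not support the uniform threshold. In \eqref{eq:boseQ}, every one of the $R-2j$ summands has normalized order exactly $Z^{-j}$: up to bounded factors, the $d$-th is $Z^{-j}q^{d(2j+1-d)/2}$. So its first summand does not give the leading constant. The formula also does not present $c_j/u_R$ as a Laurent polynomial in $x$ of bounded length, so your count of ``polynomially many in $K$'' terms fails. The paper instead writes $Q_B$ through Lemma~\ref{lem:ambient-identity} with $E_b=\gb{K}{b}$, and uses the telescoped diagonal $\coeff{j}\mathcal K_{R,j}=u_{R-2j}\gb{R-j-1}{j}$ from \eqref{eq:closed-kernel-coefficients}. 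This gives every $H_j$ in closed form and gives $c_K=u_{R-2K}\gb{R-K-1}{K}>0$ directly. It also makes each $c_j/u_R$ a fixed Laurent polynomial in $x$ for $n\ge K+2$, to which the $\ell_1$-norm Cauchy bound applies uniformly for $n\ge20K^2$.

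Finally, your $3$-connectivity sketch is unclear. All deleted points lie in $W\oplus X_1\oplus\cdots\oplus X_h$, which has codimension $n\ge2$, so Lemma~\ref{lem:projective-density-connectivity} gives $3$-connectivity at once.
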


The rank bound is sufficient, and no minimality assertion is intended.
For each prescribed field the construction gives a matroid over that
field; it does not assert that one matroid has representations over
all finite fields.

\subsection{Construction, size and connectivity}

Take \(W=\F_q^K\) with basis \(e_1,\ldots,e_K\), and set
\[
 L_a=\langle e_1,\ldots,e_{\ell_a}\rangle,\qquad w_0=e_K.
\]
For \(\ell_a=0\), the vertex is zero. Since
\(\ell_a\le K-3\), the origin \(w_0\) belongs to none of the vertices.
Choose independent external spaces \(X_0,\ldots,X_h\), independent
also from \(W\), with
\[
 \dim X_0=n,\qquad \dim X_a=\lambda_an\quad(1\le a\le h).
\]
In \(V=W\oplus X_0\oplus\cdots\oplus X_h\), delete all of \(\PP(W)\)
and, for \(a=1,\ldots,h\), the charts
\begin{equation}\label{eq:pattern-charts}
 C_a=\{\langle w_0+l+x\rangle:l\in L_a,\ x\in X_a\}.
\end{equation}
No chart is deleted on \(X_0\). Use one representative of each remaining
projective point as a matrix column to define \(M(q,S,K,n)\).
The charts are disjoint outside \(\PP(W)\), because their nonzero external
projections lie in different axes. Hence
\begin{equation}\label{eq:pattern-size}
 |E(M)|=\frac{q^R-q^K}{q-1}
       -\sum_{a=1}^h q^{\ell_a}(q^{\lambda_an}-1).
\end{equation}
The columns are distinct projective points, so \(M\) is simple.

\begin{lemma}\label{lem:projective-density-connectivity}
Let \(R\ge3\), and let \(E\) be a set of projective points in an
\(R\)-dimensional \(\F_q\)-space \(V\). Suppose that \(E\) contains
every projective point outside a subspace of codimension at least two.
Then every partition of \(E\) has a spanning side, and the simple
matroid represented by \(E\) is \(3\)-connected.
\end{lemma}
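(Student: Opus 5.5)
The plan is to prove the spanning claim by a direct covering argument, and then derive $3$-connectivity from it through the rank function.

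Let $U$ be the subspace of codimension $c\ge2$ that contains every missing point, so $E\supseteq\PP(V)\setminus\PP(U)$. Suppose $E=E_1\sqcup E_2$ and neither side spans $V$. Then each $E_i$ lies in some hyperplane $H_i$, and hence
\[
 \PP(V)\setminus\PP(U)\subseteq\PP(H_1)\cup\PP(H_2).
\]
Count vectors. The left side has $q^R-q^{R-c}\ge q^R-q^{R-2}$ vectors outside $U$. The union $H_1\cup H_2$ has at most $2q^{R-1}-q^{R-2}$ nonzero-or-zero vectors, with equality when $H_1\ne H_2$. Since $q\ge2$,
\[
 q^R-q^{R-2}>2q^{R-1}-q^{R-2}.
\]
In fact, whenever $q\ge2$ and $R\ge2$, we have $q^R\ge2q^{R-1}$, with equality only at $q=2$. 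So the case $q=2$ needs the additional $-q^{R-c}$ versus $-q^{R-2}$ bookkeeping, which I check carefully now. For $q=2$, the union of two distinct hyperplanes is $V$ minus the affine set $V\setminus(H_1\cup H_2)$, which has $2^{R-2}$ elements. These vectors must all lie in $U$. But that complement is a coset of $H_1\cap H_2$ of dimension $R-2$, and it spans $V$ (its differences give $H_1\cap H_2$, and it is not contained in that subspace). It therefore cannot lie in the proper subspace $U$. If $H_1=H_2$, the complement of $H_1$ is even larger and also spans. In every case we reach a contradiction, so some side spans. I would phrase the whole argument uniformly in this way: the complement of two hyperplanes contains an affine set spanning $V$, and it cannot be inside $U$. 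I expect this to be the main point to get right.

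For $3$-connectivity, I would use the simple matroid $M$ on $E$ of rank $R$. A $k$-separation $(E_1,E_2)$ with $k\in\{1,2\}$ means $|E_i|\ge k$ and $r(E_1)+r(E_2)-R\le k-1$. By the first part, one side, say $E_1$, has rank $R$. Then $r(E_2)\le k-1\le1$. If $k=1$, this gives $r(E_2)=0$, which is impossible because $M$ is loopless and $E_2$ is nonempty. If $k=2$, then $r(E_2)\le1$ with $|E_2|\ge2$ forces two parallel elements, which contradicts simplicity. Hence $M$ has no $1$- or $2$-separation. The hypothesis $R\ge3$ ensures $M$ has enough elements for $3$-connectivity to be meaningful: $E$ contains all points outside $U$, so $|E|\ge4$ and the degenerate small cases are excluded.
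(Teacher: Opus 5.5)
Your connectivity step is correct and is the paper's argument in different words: once one side spans, the other side has rank at most $k-1$, which looplessness ($k=1$) and simplicity ($k=2$) exclude. The genuine problem is in the binary case of the spanning claim. The coset $v+(H_1\cap H_2)$ does \emph{not} span $V$. Its span is $\langle v\rangle+(H_1\cap H_2)$, which is a hyperplane, so ``it therefore cannot lie in the proper subspace $U$'' does not follow.

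As written, that step uses only that $U$ is proper and never that its codimension is at least two. For a hyperplane $U$ the conclusion is in fact false. Over $\F_2$, take $H_i=\ker f_i$ and $U=\ker(f_1+f_2)$. Then $V\setminus(H_1\cup H_2)=\{f_1=f_2=1\}\subseteq U$, and splitting the points outside $U$ by the value of $f_1$ gives a partition with no spanning side. For the same reason, your proposed uniform phrasing (the complement of two hyperplanes contains an affine set spanning $V$) fails exactly at $q=2$, which is the only case where you need it.

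The repair is short, and either version uses the codimension hypothesis where it belongs. The first option: the coset spans an $(R-1)$-dimensional subspace while $\dim U\le R-2$, so it cannot lie in $U$. The second option is the paper's count, using nonzero vectors (equivalently, projective points) rather than all vectors. The $q^R-q^{R-c}$ vectors outside $U$ are all nonzero, while $H_1\cup H_2$ has at most $2q^{R-1}-q^{R-2}-1$ nonzero vectors. The margin $q^{R-1}(q-2)+1$ is positive for every $q\ge2$.

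Including the zero vector in your count of $H_1\cup H_2$ discarded exactly this $+1$. That is why $q=2$ looked exceptional, and why your first displayed strict inequality is false at $q=2$. Finally, $|E|\ge4$ needs its one-line computation: there are at least $(q^R-q^{R-2})/(q-1)=q^{R-2}(q+1)\ge6$ points outside $U$.
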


\begin{proof}
The number of retained points is at least
\((q^R-q^{R-2})/(q-1)\). Two distinct ambient hyperplanes have
projective union of size
\[
 \frac{2q^{R-1}-q^{R-2}-1}{q-1};
\]
two coincident hyperplanes have fewer points. The difference is
\[
 \frac{q^{R-1}(q-2)+1}{q-1}>0.
\]
This is positive also for \(q=2\), when it equals one.
Thus \(E\) is not contained in the union of two proper hyperplanes.
It spans \(V\), and if neither side of a partition spanned \(V\),
extending their spans to hyperplanes would give a contradiction.

Write \(\lambda_M(A)=r(A)+r(E\setminus A)-R\).
In a partition with both sides nonempty, each side has rank at least one
and one side has rank \(R\), so \(\lambda_M(A)\ge1\).
If both sides contain at least two elements, simplicity gives rank
at least two on each side, so \(\lambda_M(A)\ge2\).
The size lower bound is at least \(q^{R-2}(q+1)\ge6\).
Thus there are at least four elements and no \(1\)- or \(2\)-separation.
\end{proof}

All deleted points in \eqref{eq:pattern-charts} lie in
\(U=W\oplus X_1\oplus\cdots\oplus X_h\), whose codimension is \(n\).
Lemma~\ref{lem:projective-density-connectivity} applies for \(n\ge2\).
It proves the rank and \(3\)-connectivity assertions for every parameter
used in Theorem~\ref{thm:patterns}.

\subsection{Exact Laurent coefficients and their leading terms}

Fix \(q,S,K\), and let \(n\) vary. Put
\[
 x=q^n,\qquad Y=q^K,\qquad Z=q^R=Yx^D,\qquad
 p_j(x)=\frac{c_j}{u_R}.
\]
For \(n\ge K+2\), these normalized coefficients are finite Laurent
polynomials in \(x\) with rational coefficients. To see this directly,
use \eqref{eq:single-chart} and Theorem~\ref{thm:composition}.
Every variable Gaussian coefficient is a finite product of factors
of the form \(a x^\lambda-1\), divided by fixed nonzero integers, and
\begin{equation}\label{eq:normalized-u}
 \frac{u_{R-d}}{u_R}
   =q^{d(d+1)/2-Kd}x^{-Dd}.
\end{equation}
The relevant individual defect in degree \(j\) has \(d\le j+1\),
by \eqref{eq:single-bose-tau}, independently of \(n\).
All \(m_a=\lambda_an\) are at least \(K+2\), and \(R>2K\), so every
required Gaussian product and kernel is in a fixed valid range.

For \(j\ge0\), put
\[
 v_j=\frac{q^{j(j+1)/2}}{\prod_{a=1}^j(q^a-1)},\qquad v_0=1.
\]
The kernel in \eqref{eq:ambient-kernel} satisfies, for
\(0\le b,j\le K\) and \(R>2K\),
\begin{equation}\label{eq:closed-kernel-coefficients}
 \coeff{j}\mathcal K_{R,b}=
 \begin{cases}
 0,&b<j,\\
 u_{R-2j}\gb{R-j-1}{j},&b=j,\\
 (-1)^{b+j}u_{R-b-j}\gb{R-b}{j},&b>j.
 \end{cases}
\end{equation}
Only the diagonal needs a summation identity. With \(N=R-j\), set
\[
 T_a=(-1)^a\gb{N}{a}u_{N-a},\qquad
 A_a=(-1)^a\gb{N-1}{a}u_{N-a},\qquad A_{-1}=0.
\]
Gaussian Pascal and \(u_{N-a+1}=q^{N-a}u_{N-a}\) give
\(T_a=A_a-A_{a-1}\). Summing through \(a=j\) proves the middle line
of \eqref{eq:closed-kernel-coefficients}; the other lines follow by
extracting the coefficient from the kernel.
Its diagonal consequently has asymptotic form
\begin{equation}\label{eq:kernel-leading}
 \frac{\coeff{j}\mathcal K_{R,j}}{u_R}
      \sim v_j Z^{-j}.
\end{equation}
For \(b>j\) the corresponding normalized coefficient is \(O(Z^{-b})\).
All asymptotics below keep \(q,S,K\) fixed.

Call chart \(a\) active at degree \(j\ge1\) if \(j\le\ell_a+1\).
Its empty-space term with \(b=j\) has positive leading term
\begin{equation}\label{eq:active-leading}
 v_j\gb{\ell_a}{j-1}q^{\ell_a-j+1}
      x^{\lambda_a}Z^{-j}.
\end{equation}
The base Bose term has leading term
\(v_j\gb{K}{j}Z^{-j}\).
The empty-space terms with \(b>j\) are smaller by at least a factor
of order \(Z^{-1}\).

We check that no deficient trace changes the leading terms.
An individual nonzero defect in degree \(j\) has \(d+v=2j\),
\(v\ge j-1\), and \(d\le j+1\). Hence the chart is active.
Its quotient count has degree at most \(j+1\) in \(q^{m_a}=x^{\lambda_a}\),
so its normalized contribution is
\[
 O\bigl(x^{(j+1)\lambda_a}Z^{-2j}\bigr).
\]
If \(\lambda_*\) is the largest weight of an active chart, its exponent
relative to the leading active term is at most
\[
 (j+1)\lambda_a-\lambda_*-Dj
       \le j(\lambda_a-D)<0.
\]
Thus every individual defect is lower order.

For a binary pair term in degree \(j\), the vertex dimension in
\eqref{eq:binary-composition} is \(b=2j-2\).
Its presence requires both vertices to have dimension at least
\(2j-2\ge j-1\), so both charts are active. Its normalized size is
\[
 O\bigl(x^{\lambda_a+\lambda_b}Z^{-2j}\bigr),
\]
and its exponent relative to the leading active term is at most
\[
 \lambda_a+\lambda_b-\lambda_*-Dj
       \le \min(\lambda_a,\lambda_b)-Dj<0.
\]
This includes all overlaps of the nested vertices.
If no chart is active, both individual and mixed deficient corrections
vanish exactly, and only the base Bose leading term remains.
At degree zero there is no deficient contribution; the empty zero-space
contributes one, and all other terms are lower order.

Let \(\lambda(j)\) be the weight of the first active chart at \(j\),
and put \(\lambda(j)=0\) if no chart is active. We have proved
\begin{equation}\label{eq:coefficient-leading}
 p_0(x)\sim1,\qquad
 p_j(x)\sim H_jx^{\lambda(j)-Dj}\quad(1\le j\le K),
\end{equation}
where
\begin{equation}\label{eq:leading-constants}
 H_j=
 \begin{cases}
 v_j\gb{\ell_a}{j-1}q^{\ell_a-j+1-Kj},
     &a\text{ is the first active chart at }j,\\
 v_j\gb{K}{j}q^{-Kj},&\text{no chart is active at }j.
 \end{cases}
\end{equation}
Every \(H_j\) is positive. The weights \(\lambda(j)\) are nonincreasing,
and \(\lambda(1)<D\). Thus the exponents in
\eqref{eq:coefficient-leading}, including the exponent zero at \(j=0\),
strictly decrease. All \(p_j-p_{j+1}\) have positive leading coefficient.
The empty kernels have degree at most \(K\); each individual correction
has degree at most \(\ell_a+1\le K-2\); and each binary pair has degree at
most \(1+\lfloor\min(\ell_a,\ell_b)/2\rfloor\le K-2\).
There is no coefficient above \(K\), and the positive leading coefficient
of \(p_K\) proves eventual exact degree \(K\).

\subsection{Selected and nonselected determinants}

Set \(\lambda_{h+1}=\lambda_{h+2}=0\).
Chart \(a\) stops being active exactly at degree \(s_a\).
At the selected index \(s_a\), the left coefficient has weight
\(\lambda_a\), and the middle coefficient has weight \(\lambda_{a+1}\).
The right coefficient has weight \(\lambda_{a+1}\) if the next selected
index is separated, and weight \(\lambda_{a+2}\) if the next selected
index is consecutive. Consequently the exponent of the Tur\'an ratio is
\[
 2\lambda_{a+1}-\lambda_a-\lambda_{\rm right}.
\]
If the next selected index is separated, this is
\(\lambda_{a+1}-\lambda_a<0\). If it is consecutive, it is \(-1\),
because the triangular weights in \eqref{eq:pattern-parameters}
have second difference one. At the last selected index it is
\(-\lambda_h=-1\). Positive leading constants now show that every
selected ratio tends to zero. Since \(S\) is finite, the conclusion
is simultaneous.

We account separately for every unselected internal index.
At index one the ratio exponent is
\[
 2\lambda_1-\lambda(2)>0.
\]
At an unselected index immediately before a weight drop, the ratio
exponent is \(\lambda_a-\lambda_{a+1}>0\).
At all remaining unselected indices, the three weights are constant.
They come either from one active chart or from the final Bose range.
For \(v\ge2\), set
\[
 f_q(v)=\frac{q^v-1}{q^{v-1}-1}>q.
\]
Adjacent Gaussian ratios and
\[
 \frac{v_j^2}{v_{j-1}v_{j+1}}=\frac{f_q(j+1)}q
\]
give
\begin{equation}\label{eq:constant-weight-ratios}
 \frac{H_j^2}{H_{j-1}H_{j+1}}=
 \begin{cases}
 f_q(\ell_a-j+2)f_q(j)f_q(j+1)/q,
       &\text{inside an active-chart range},\\
 f_q(K-j+1)f_q(j+1)^2/q,
       &\text{inside the final Bose range}.
 \end{cases}
\end{equation}
For the first line all three degrees are active for the same chart,
and \(j\ge2\), so every argument of \(f_q\) is at least two.
For the second line \(j<K\), with the same property.
Both ratios are strictly greater than \(q^2\).
Thus all unselected internal determinants have positive leading
coefficients, while the negatives of all selected determinants have
positive leading coefficients. This classification also covers the
last internal index and consecutive selected indices.

\subsection{Effective thresholds and a uniform bound}

To obtain one bound valid for every stated conclusion, consider the
following Laurent polynomials
\begin{equation}\label{eq:sign-certificates}
 p_K,\qquad p_j-p_{j+1}\ (0\le j<K),\qquad
 \begin{cases}
 p_{j-1}p_{j+1}-p_j^2,&j\in S,\\
 p_j^2-p_{j-1}p_{j+1},&j\notin S,
 \end{cases}
 \quad(1\le j<K).
\end{equation}
Each has a positive leading coefficient. For any one of them, write
\(p(x)=\sum_{e\le d}a_ex^e\), \(a_d>0\), and define
\[
 B_p=1+\frac{\sum_{e<d}|a_e|}{a_d}.
\]
For \(x\ge B_p\), integer exponents and \(x\ge1\) give
\[
 p(x)\ge x^{d-1}
          \left(a_dx-\sum_{e<d}|a_e|\right)>0.
\]
Thus the exact coefficients in the preceding section give an effective
rational threshold for all conclusions.

To bound that threshold uniformly, let \(\|p\|_1\) be the sum of the
absolute values of the Laurent coefficients of \(p\).
This norm is subadditive and submultiplicative, independently of the
range of the exponents. Throughout,
\[
 K\ge3,\qquad h\le K-2,\qquad
 \ell_a\le K-3,\qquad q\ge2.
\]
We use the elementary bounds
\(\gb{a}{b}\le q^{ab}\), \(K+1\le q^K\), and \(2\le q\).

First, a normalized kernel coefficient has norm at most
\begin{equation}\label{eq:kernel-height}
 \left\|\frac{\coeff{j}\mathcal K_{R,b}}{u_R}\right\|_1
       \le q^{K^2+3K},\qquad 0\le b\le K.
\end{equation}
Indeed, the variable Gaussian products in the ambient kernel have at
most \(K\) factors, each numerator of norm at most
\(1+q^K\le q^{K+1}\), and denominators at least one.
For \(0\le d\le2K\), the scalar exponent in
\eqref{eq:normalized-u} satisfies
\[
 d(d+1)/2-Kd\le K;
\]
the convex quadratic has maximum \(K\) at an endpoint of this interval.
There are at most \(K+1\) summands. Multiplication of these bounds proves
\eqref{eq:kernel-height}.

For the base and the additional empty terms, the fixed Gaussian
multiplier is at most \(q^{K^2}\), and the chart multiplier is at most
\(q^{K^2+K}\). Also \(\|x^{\lambda_a}-1\|_1=2\le q\).
Summing at most \(K+1\) kernels for each of \(h\) charts and the base
gives the bound
\begin{equation}\label{eq:empty-height}
 (h+1)q^{2K^2+5K+1}.
\end{equation}

Next consider the individual deficient terms in \eqref{eq:single-chart}.
In the Gaussian product for \eqref{eq:single-deficiency-count}, every
variable numerator has norm \(1+q^{-a}\le2\le q\), and the additional
scalar \(q^{1-d}\) is at most one. Its norm is at most \(q^K\).
The extra binary rank-two term has norm at most \(2q^K\); the combined
quotient count therefore has the safe bound \(q^{K+2}\).
The lift Gaussian, lift scalar and tau in
\eqref{eq:single-bose-tau} are each at most \(q^{K^2}\).
For tau this uses its nonnegativity and the upper bound
\(\tau\le\gb{v}{s-1}\). There are at most \(K^2\le q^{2K}\)
parameter pairs \((d,v)\), and the normalized \(u\)-factor contributes
at most \(q^K\). All individual defects together consequently have
norm at most
\begin{equation}\label{eq:individual-height}
 hq^{3K^2+4K+2}.
\end{equation}

Finally, in the binary pair sum, bound the Gaussian by \(q^{K^2}\),
the vertex scalar by \(q^{2K}\), and the affine tau from
\eqref{eq:affine-tau} by \(q^{K^2+K}\).
The two binomials in \(x\) have product norm \(4\le q^2\).
The normalized \(u\)-factor is at most \(q^K\); summing vertex dimensions
costs at most \(K\le q^K\), and there are fewer than \(h^2\) pairs.
The pair terms thus have norm at most
\begin{equation}\label{eq:pair-height}
 h^2q^{2K^2+5K+2}.
\end{equation}
For \(q>2\) these terms are absent.

Each exponent in \eqref{eq:empty-height}--\eqref{eq:pair-height}
is at most \(3K^2+6K+3\), and
\((h+1)+h+h^2=(h+1)^2\).
Using \(h+1\le q^K\) and
\(3K^2+8K+3\le6K^2\), valid for all \(K\ge3\), we obtain
\begin{equation}\label{eq:coefficient-height}
 \|p_j\|_1\le q^{6K^2}\qquad(0\le j\le K).
\end{equation}

Every leading constant in \eqref{eq:leading-constants} satisfies
\(H_j\ge q^{-K^2}\): \(v_j>1\) for \(j\ge1\), the relevant Gaussian
is at least one, the extra chart scalar is at least one, and
\(q^{-Kj}\ge q^{-K^2}\). Set \(H_0=1\).
For a determinant with unequal leading exponents, the positive leading
coefficient of its signed version in \eqref{eq:sign-certificates} is
a square or a product of two such constants, at least \(q^{-2K^2}\).
When the exponents agree, \eqref{eq:constant-weight-ratios} gives the bound
\[
 H_j^2-H_{j-1}H_{j+1}
   \ge(1-q^{-2})H_j^2\ge\frac34q^{-2K^2}.
\]
The terminal and adjacent-difference polynomials have the stronger
leading-coefficient bound \(q^{-K^2}\).
Every polynomial in \eqref{eq:sign-certificates} therefore has leading
coefficient at least \(\frac34q^{-2K^2}\), and, by
\eqref{eq:coefficient-height}, norm at most \(2q^{12K^2}\).
It follows that
\begin{equation}\label{eq:uniform-cauchy-bound}
 B_p\le1+\frac83q^{14K^2}\le q^{14K^2+2}.
\end{equation}
Since \(20K^2>14K^2+2\) and \(20K^2\ge K+2\), every \(n\ge20K^2\)
lies in the stable range and satisfies \(q^n>B_p\) for every required
polynomial.

\begin{proof}[Proof of Theorem~\ref{thm:patterns}]
The construction and Lemma~\ref{lem:projective-density-connectivity}
give simplicity, rank \(R\), representability and \(3\)-connectivity.
The exact single-chart and pair formulas show that no coefficient
above \(K\) occurs. The bound \eqref{eq:uniform-cauchy-bound} makes
every polynomial in \eqref{eq:sign-certificates} positive for the stated
range of \(n\). Positivity of \(p_K\), followed by the adjacent
differences, gives the actual degree and all strict coefficient
inequalities. The signed determinants give exactly the prescribed
negative set and strictly positive determinants elsewhere.
The selected leading exponents already prove
\eqref{eq:simultaneous-ratio-limit}. Finally, set \(n=20K^2\) in
\(R=K+Dn\) to obtain \eqref{eq:pattern-rank-bound}.
\end{proof}

\subsection{Severity and prescribed depth}

\begin{corollary}\label{cor:simultaneous-severity}
Fix \(q,S,K\) as in Theorem~\ref{thm:patterns}.
For every \(\varepsilon>0\), the matroids in that theorem satisfy
\[
 0<\frac{c_j^2}{c_{j-1}c_{j+1}}<\varepsilon
       \qquad(j\in S)
\]
for all sufficiently large \(n\), simultaneously, while preserving
the exact degree, strict coefficient decrease and all prescribed signs.
\end{corollary}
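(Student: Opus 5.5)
The corollary follows by combining the limit \eqref{eq:simultaneous-ratio-limit} with the positivity conclusions of Theorem~\ref{thm:patterns}, applied on a common tail of $n$. Throughout we fix $q,S,K$ and keep the family $M(q,S,K,n)$ with $n\ge20K^2$.

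First, Theorem~\ref{thm:patterns} already gives, for every $n\ge20K^2$, actual degree $K$, $c_0>\cdots>c_K>0$, $\Delta_j<0$ exactly for $j\in S$ and $\Delta_j>0$ at the other internal indices. Positivity of all $c_j$ makes each selected ratio $c_j^2/(c_{j-1}c_{j+1})$ well defined and strictly positive. This gives the lower inequality $0<\cdot$ for every admissible $n$.

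For the upper inequality, take $j\in S$ and write the ratio in the normalized form $p_j^2/(p_{j-1}p_{j+1})$. This is legitimate because the factor $u_R$ cancels. By \eqref{eq:coefficient-leading} each $p_i$ is asymptotic to $H_ix^{e_i}$ with $H_i>0$. The ratio is therefore asymptotic to $\bigl(H_j^2/(H_{j-1}H_{j+1})\bigr)x^{E_j}$. The section on selected determinants showed $E_j=2\lambda_{a+1}-\lambda_a-\lambda_{\rm right}\le-1$, with the three cases (separated, consecutive, last selected index) handled separately. Since $x=q^n\to\infty$, the ratio tends to zero, so there is some $n_j$ beyond which it is less than $\varepsilon$.

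Because $S$ is finite, we set $n_\varepsilon=\max\bigl(20K^2,\max_{j\in S}n_j\bigr)$. For every $n\ge n_\varepsilon$ all selected ratios lie in $(0,\varepsilon)$ simultaneously. The exact degree, strict decrease and prescribed signs persist on this tail because they hold for every $n\ge20K^2$.

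The only delicate point is that the asymptotic equivalence of each $p_i$ must be a genuine limit of Laurent polynomials in $x$. This holds because, in the stable range $n\ge K+2$, each $p_i$ is a fixed Laurent polynomial with the stated positive leading coefficient. If an explicit threshold were wanted, one could bound $\varepsilon\,p_{j-1}p_{j+1}-p_j^2$ with the $B_p$ estimate of the effective-threshold subsection. The nonexplicit statement does not need this.
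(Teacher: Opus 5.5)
Your proposal is correct and is essentially the paper's argument. Positivity of the coefficients from Theorem~\ref{thm:patterns} makes the ratios positive, and the limit \eqref{eq:simultaneous-ratio-limit}, which you re-derive from the leading exponents, gives a threshold for each $j\in S$; since $S$ is finite, you then take the maximum of these thresholds and $20K^2$, exactly as the paper does.
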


\begin{proof}
Every denominator is positive in the theorem's range.
Each selected ratio tends to zero, and there are finitely many of them.
Take the maximum of their thresholds and \(20K^2\).
\end{proof}

\begin{corollary}\label{cor:prescribed-depth}
For every finite field \(\F_q\), every \(i\ge2\) and every \(d\ge1\),
there is a simple \(3\)-connected \(\F_q\)-representable matroid with
actual inverse-Q degree \(i+d\), positive strictly decreasing
coefficients, a negative determinant exactly at index \(i\), and
positive determinants at every other internal index.
Such a matroid can be chosen in rank
\[
 (i+d)+40(i+d)^2.
\]
The failure ratio at the fixed index can also be made arbitrarily small
without changing the polynomial degree.
\end{corollary}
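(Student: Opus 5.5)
This is a direct specialization of Theorem~\ref{thm:patterns} with $S=\{i\}$ and $K=i+d$. The plan is to check the hypotheses, read off the conclusions, compute the rank, and take the small-ratio clause from Corollary~\ref{cor:simultaneous-severity}.

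\emph{Hypotheses.} Since $i\ge2$, the set $S=\{i\}$ is a nonempty finite subset of $\{2,3,\ldots\}$. Since $d\ge1$, we have $K=i+d\ge i+1=\max S+1$, as the theorem requires. Then $h=|S|=1$, and \eqref{eq:pattern-parameters} gives $\lambda_1=\binom{3}{2}=3$ and $D=1+\binom{3}{3}=2$.

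\emph{Conclusions.} For every $n\ge20K^2$, the matroid $M(q,\{i\},K,n)$ is simple, $3$-connected and $\F_q$-representable, with rank $R=K+2n$. Its inverse polynomial has actual degree $K=i+d$, and its coefficients satisfy $c_0>\cdots>c_K>0$. The internal determinant $\Delta_j$ is negative exactly when $j=i$ and positive at every other internal index $1\le j<K$. These are precisely the asserted properties. Note that $i<K$, so the index $i$ is indeed internal.

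\emph{Rank.} Take $n=20K^2$ in $R=K+Dn$ with $D=2$. This gives
\[
 R=K+40K^2=(i+d)+40(i+d)^2,
\]
which is the stated rank. This also agrees with \eqref{eq:pattern-rank-bound} at $h=1$.

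\emph{Small failure ratio.} Apply Corollary~\ref{cor:simultaneous-severity} to the same fixed $q$, $S=\{i\}$ and $K=i+d$. For every $\varepsilon>0$ it gives a threshold in $n$ beyond which $0<c_i^2/(c_{i-1}c_{i+1})<\varepsilon$. Throughout this range all of the theorem's conclusions still hold, so the degree stays exactly $i+d$ and the sign pattern is unchanged.

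There is no real obstacle here. The only steps needing care are evaluating $D$ at $h=1$ and confirming that the hypothesis $K\ge\max S+1$ is exactly the condition $d\ge1$.
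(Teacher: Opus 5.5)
Your proposal is correct and follows the paper's proof exactly: specialize Theorem~\ref{thm:patterns} to $S=\{i\}$ and $K=i+d$, so that $D=2$; take $n=20K^2$ for the rank; then let $n$ grow and apply Corollary~\ref{cor:simultaneous-severity} for the small failure ratio. One harmless slip: with $h=a=1$ one has $\lambda_1=\binom{h-a+2}{2}=\binom{2}{2}=1$, not $3$, which is consistent with $D=1+\lambda_1=2$; only $D$ enters your argument, so nothing else changes.
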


\begin{proof}
Take \(S=\{i\}\), \(K=i+d\), so \(D=2\), and first choose
\(n=20K^2\). For the last assertion let \(n\) grow and apply
Corollary~\ref{cor:simultaneous-severity}.
\end{proof}

For a rational \(\alpha=a/b\in(0,1)\), take \(i=am\) and
\(d=(b-a)m\), with \(m\) large enough that \(i\ge2\).
Corollary~\ref{cor:prescribed-depth} then gives
\(i/\deg Q=\alpha\), while both \(i\) and \(\deg Q-i\) tend to infinity.
For an arbitrary real \(\alpha\in(0,1)\), integer choices with
\(i/(i+d)\to\alpha\) give the corresponding limiting statement.
The severity parameter may be chosen separately for each member, so the
ratios can tend to zero along these sequences as well.
Thus no nonempty open interval of normalized internal positions is
universally protected from ordinary-Q log-concavity failure.
The exact-pattern theorem also realizes any finite consecutive block
of indices at least two. All these conclusions retain positive
determinants at index one.

\section{The first-index boundary}\label{sec:first-index-boundary}
Theorem~\ref{thm:patterns} prescribes negative determinants only at
indices at least two. Its examples satisfy a stronger inequality at
index one.

\begin{corollary}\label{cor:first-index-factor-two}
For every matroid in Theorem~\ref{thm:patterns},
\[
 c_1^2>2c_0c_2.
\]
In particular, the first index is internal and $\Delta_1>0$.
\end{corollary}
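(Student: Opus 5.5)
We need to show $c_1^2>2c_0c_2$ for all $n\ge 20K^2$ in Theorem~\ref{thm:patterns}. Since $K\ge\max S+1\ge3$, the index one is internal, so the second assertion follows once the inequality holds. The plan is to treat $p_1^2-2p_0p_2$ as one more sign certificate, alongside those in \eqref{eq:sign-certificates}, and to run the same leading-term and Cauchy-bound argument on it.

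\textbf{Leading exponents.} By \eqref{eq:coefficient-leading}, $p_0\sim1$, $p_1\sim H_1x^{\lambda(1)-D}$ and $p_2\sim H_2x^{\lambda(2)-2D}$. The exponent of $p_1^2$ is $2\lambda(1)-2D$, and that of $p_0p_2$ is $\lambda(2)-2D$. Their difference is $2\lambda(1)-\lambda(2)$, which is exactly the index-one ratio exponent already shown positive in the subsection on selected and nonselected determinants. Concretely, chart $1$ is active at degree one (every chart is, since $\ell_a\ge0$), so $\lambda(1)=\lambda_1\ge1$. Also $\lambda(2)\le\lambda_1$, whether chart 1 remains active at degree two or not. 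Hence
\[
2\lambda(1)-\lambda(2)\ge\lambda_1\ge1.
\]
The factor $2$ does not affect the leading exponent. So $p_1^2-2p_0p_2$ has leading term $H_1^2x^{2\lambda_1-2D}$, with positive coefficient at least $q^{-2K^2}$ by the lower bound $H_j\ge q^{-K^2}$ proved there.

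\textbf{Uniform threshold.} By \eqref{eq:coefficient-height} and submultiplicativity of $\|\cdot\|_1$, the norm of $p_1^2-2p_0p_2$ is at most $3q^{12K^2}$. The quantity $B_p$ is then at most $1+3q^{14K^2}\le q^{14K^2+2}$, using $q\ge2$. Hence $q^n>B_p$ for every $n\ge20K^2$, exactly as in \eqref{eq:uniform-cauchy-bound}. Multiplying by $u_R^2>0$ gives $c_1^2>2c_0c_2$. Then $\Delta_1=c_1^2-c_0c_2>c_0c_2>0$, since all coefficients are positive by Theorem~\ref{thm:patterns}.

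\textbf{Main obstacle.} The only delicate point is the uniform bound: the extra factor $2$ must not spoil the leading-coefficient lower bound or the norm estimate. Because the exponents are unequal, the leading coefficient is exactly $H_1^2$ and no cancellation with $2H_0H_2$ occurs. For this I need to confirm that the exponent gap is a strictly positive integer even when $S\ni2$, in which case chart 1 is inactive at degree two. The inequality $\lambda(2)\le\lambda_2<\lambda_1$ handles that case, so I expect no difficulty beyond bookkeeping.
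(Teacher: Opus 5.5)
Your proposal is correct and is essentially the paper's own proof. It follows the same steps: the exponent gap $2\lambda_1-\lambda(2)\ge\lambda_1\ge1$ from the activity of every chart at degree one, the leading coefficient $H_1^2\ge q^{-2K^2}$, the norm bound $3q^{12K^2}$, and the Cauchy threshold $1+3q^{14K^2}\le q^{14K^2+2}<q^{20K^2}$, followed by multiplication by $u_R^2$. Your explicit deduction $\Delta_1>c_0c_2>0$ from the positivity of the coefficients is a harmless extra step that the paper leaves implicit.
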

\begin{proof}
Use the Laurent polynomials $p_j=c_j/u_R$ and weights from
Section~\ref{sec:patterns}. All charts are active at degree one, so
$\lambda(1)=\lambda_1\ge1$ and $0\le\lambda(2)\le\lambda_1$.
The leading exponent of $p_1^2$ therefore exceeds that of $p_0p_2$
by $2\lambda_1-\lambda(2)>0$.
Thus $f=p_1^2-2p_0p_2$ has positive leading coefficient
$H_1^2\ge q^{-2K^2}$. By \eqref{eq:coefficient-height},
$\|f\|_1\le3q^{12K^2}$. The same Cauchy bound used above gives
\[
 B_f\le1+3q^{14K^2}\le q^{14K^2+2}<q^{20K^2}.
\]
Consequently $f(q^n)>0$ whenever $n\ge20K^2$.
Multiplication by $u_R^2>0$ gives the result. The theorem has
$K\ge3$ and $c_0,c_1,c_2>0$, so the inequality is internal.
\end{proof}

This argument applies to the construction over every finite field,
including $\F_2$. It does not establish $\Delta_1(M)\ge0$ for an arbitrary
matroid whose inverse polynomial has degree at least two. That universal
question remains open. Positive-class results preserved in the
mathematical supplement are not used here. The examples in this paper
are strictly decreasing and hence unimodal; they do not settle
unimodality of ordinary $Q$ in general.

\section{Verification and scope}\label{sec:methods}
The proofs are symbolic. The verification materials provide
error-detection checks and alternative calculations, using integer or
rational arithmetic for every decisive computation.

Two coefficient implementations check the all-field chart and pattern
formulas, actual support, determinant signs and threshold estimates.
They share the ambient reconstruction identity and trace classification.
Literal flat-lattice calculations on smaller represented matroids use
characteristic recursion for $P$ and signed incidence inversion for $Q$.
They include binary and ternary cases, arithmetic in $\F_4$, affine
top-coefficient controls, and a case where dependent external directions
invalidate the composition formula. Arithmetic modulo four is not used
as a substitute for $\F_4$. The large matroids in
Theorem~\ref{thm:patterns} are covered by its proof, not enumeration of
their flat lattices.

Whole-target characteristic-recursion calculations also recover the
$952$- and $889$-element binary examples and the $3025$-element ternary
example. These calculations use neither the Bose coefficient sum nor
sequential deletion. They share finite-field geometry, proved subspace
multiplicities, simplification invariance and the ordinary-$Q$
conventions. Uniform, fan, thagomizer and paving formulas give further
small-case controls \cite{GX,GLiLi,GLX,XZ}. Agreement between programs
does not replace a proof of their shared geometric assumptions.

The Lean 4.12.0 files check specified algebraic identities, conditional
determinant inequalities and binary coordinate geometry, including
column distinctness, closure exchange and quotient survival. Their scope
and axiom reports are included. They do not identify the coefficient
formulas with $Q_M$ in Lean or formalize the locality, composition and
pattern theorems. Those steps are supplied by the mathematical proofs.

The inverse-$Z$ polynomial $Y$ is a different invariant. For example,
\cite[Theorem 2.2]{BFMN} gives
\[
 Y_M(t)=\sum_{F\in L(M)}(-1)^{\rk M-\rk F}\mu(M/F)
          t^{\rk M-\rk F}Q_{M|F}(t).
\]
A failure for $Q_M$ alone does not imply a failure for $Y_M$.
No global minimum size, optimal rank, regular or characteristic-zero
representability, or universal first-index result is claimed here.

\subsection{AI assistance}
OpenAI Codex, using GPT-6 Astra, assisted with mathematical exploration,
proof development and checking, exact computation and research code,
literature analysis, and manuscript drafting. The author reviewed and
edited the mathematical arguments and manuscript and takes responsibility
for its contents.

\subsection{Availability of verification materials}\label{sec:availability}
The reproducibility materials are publicly available on Figshare at DOI
\href{https://doi.org/10.6084/m9.figshare.33472711}{\nolinkurl{10.6084/m9.figshare.33472711}}.
Archive version 2.0.0 in the second public version of the Figshare item
contains the mathematical supplement, verification programs, data,
scoped Lean developments, expected outputs and reproduction instructions.
It also contains a first-index subpackage with positive-class proofs
and catalogue methods. Manifests and an external checksum record file
integrity. The claim-to-file map distinguishes formula checks,
alternative reconstructions and literal small-case controls.

Statement numbers in that archive refer to the preceding manuscript
layout: its first-index positive-class statements remain in the
mathematical supplement. Corollary~\ref{cor:first-index-factor-two}
is proved here from the coefficient and leading-term bounds of
Section~\ref{sec:patterns}; it requires no additional verification
program in the public archive.

\end{document}